\documentclass[11pt]{article}

\usepackage[margin=1in]{geometry}
\usepackage[T1]{fontenc}
\usepackage[british]{babel}
\usepackage{amsmath,amssymb,amsthm,mathtools}
\usepackage[shortlabels]{enumitem}
\usepackage{microtype}
\usepackage{mathpazo}
\usepackage{xcolor}
\usepackage[
  colorlinks=true,
  linkcolor=red!55!black,
  citecolor=blue!60!black,
  urlcolor=red!55!black
]{hyperref}

\usepackage{authblk}

\newtheorem{theorem}{Theorem}[section]
\newtheorem{proposition}[theorem]{Proposition}
\newtheorem{lemma}[theorem]{Lemma}
\newtheorem{corollary}[theorem]{Corollary}
\theoremstyle{definition}
\newtheorem{definition}[theorem]{Definition}

\newtheorem{remark}[theorem]{Remark}

\title{Nef Hattori classes and products of projective lines}
\author[1]{Daniel \'Alvarez}
\author[2]{Andrew Clarke}
\author[3]{Eduardo Gonz\'alez}
\affil[1]{Instituto de Matematica, Universidade Federal do Rio de Janeiro, Brazil}
\affil[2]{Instituto de Matematica, Universidade Federal do Rio de Janeiro, Brazil}
\affil[3]{Department of Mathematics, University of Massachusetts  Boston, USA}

\date{1 October 2026}

\begin{document}
\maketitle

\begin{abstract}
Let \(X\) be a smooth connected complex projective variety with an effective algebraic \(\mathbb C^{\times}\)-action, a nonempty finite fixed locus whose weights are \(\pm1\). We show that if its Hattori line bundles are nef, then \(X\) is algebraically isomorphic to \((\mathbb{P}^1)^{\dim X}\), and these bundles become the hyperplane bundles of the factors. The product statement follows from the nef-complexity classification of Enwright and collaborators, and we describe the isomorphism through global sections. To test the required nefness we specialise the orbit-graph criterion of Romano and Wi\'sniewski. The Hirzebruch surface \(\mathbb F_2\) is an example why the topological hypotheses alone do not imply either the product conclusion or the Fano property.
\end{abstract}

\medskip
\noindent\textit{2020 Mathematics Subject Classification.} Primary 14L30. Secondary 14J45, 53D20.\par
\smallskip
\noindent\textit{Key words and phrases.} Semifree action, nef line bundle, nef complexity, Hattori class, Hirzebruch surface, and polarisation.\par

\section{Introduction}
\label{sec:introduction}

Let \(M\) be a compact connected symplectic manifold with a semifree Hamiltonian circle action, that is, one which is free away from its fixed points, and suppose that the fixed points are isolated and that there is at least one. Hattori showed that the integral cohomology ring and the Chern classes of \(M\) then agree with those of a product of two-spheres \cite{Ha-Sy92} (see also \cite[Theorem 1.2]{ToWe-On00}), and Tolman and Weitsman proved the equivariant version \cite[Theorems 2 and 1.1]{ToWe-On00}. It is natural to ask whether \(M\) must itself be a product of spheres, possibly with different symplectic areas. In \cite{ToWe-On00} it is pointed out that \((\mathbb{P}^1 )^2\) is the only example known in (real) dimension 4. In \cite{Go-Cl11} a proof of the six (real) dimension was provided. 

This paper is about the algebraic version of that question. Suppose that a smooth projective variety \(X\) of dimension \(n\) carries a semi-free algebraic \(\mathbb C^{\times}\)-action with isolated fixed points. Must it be isomorphic to a product \((\mathbb P^1)^n\) of complex projective lines? Now the complex structure has to be preserved, and the answer is no in general. The Hirzebruch surface \(\mathbb F_2\) has the desired semifree action and the cohomology ring of a product of lines, but its negative section prevents an algebraic isomorphism with \(\mathbb P^1\times\mathbb P^1\) see Section~\ref{sec:htw-package}.

What fails on \(\mathbb F_2\) is positivity, which relations in cohomology cannot detect. On a product, the line bundles pulled back from \(\mathcal O_{\mathbb P^1}(1)\), are nef, whereas on \(\mathbb F_2\) one of the corresponding classes has degree \(-1\) on the negative section. Once nefness is assumed, the classification theorem of Enwright, Filipazzi, Gongyo, Moraga, Svaldi, Wang, and Watanabe \cite[Theorem 4.2]{EnFiGoMoSvWaWa-Ne25} recovers the product.\footnote{We thank Dan Pomerleano for pointing out this result.} Our product theorem is an application of their result. We reduce to it, identify the line bundles involved, and describe the resulting isomorphism by global sections. To test nefness in practice we specialise the orbit-graph method of Romano and Wi\'sniewski \cite[Theorem 2.8]{RoWi-Ad22} to the Hattori classes.

For this paper all varieties are over \(\mathbb C\), curves are integral, and \(H^*(X;\mathbb Z)\) is singular cohomology. For a line bundle \(L\) and a curve \(C\subset X\) with normalisation \(\nu:\widetilde C\to C\), we write \(L\cdot C=\deg(\nu^*L)\), and \(L\) is \emph{nef} if this degree is non-negative for every curve.

Our first result does not involve the group action, and it is where the classification theorem is used. We write \(\operatorname{pr}_i\) for the projection of \((\mathbb P^1)^n\) onto its \(i\)-th factor.

\begin{theorem}
\label{thm:main}
Let \(X\) be a smooth connected projective variety of dimension \(n\geq1\), and let \(L_1,\ldots,L_n\) be nef line bundles. Put \(x_i=c_1(L_i)\in H^2(X;\mathbb Z)\). Suppose that the cohomology ring, top intersection, and first Chern class satisfy
\[
\begin{gathered}
H^*(X;\mathbb Z)\cong\frac{\mathbb Z[x_1,\ldots,x_n]}{(x_1^2,\ldots,x_n^2)},\\
\int_Xx_1\cdots x_n=1,\qquad c_1(TX)=2\sum_{i=1}^n x_i.
\end{gathered}
\]
Then there is an algebraic isomorphism
\[
\Phi:X\xrightarrow{\ \cong\ }(\mathbb P^1)^n,
\qquad
L_i\cong\Phi^*\operatorname{pr}_i^*
\mathcal O_{\mathbb P^1}(1)\quad(1\leq i\leq n).
\]
Each \(L_i\) is generated by its global sections and \(\dim_{\mathbb C}H^0(X,L_i)=2\). For any choice of basis \(s_{i,0},s_{i,1}\in H^0(X,L_i)\), the map
\[
p\longmapsto
\bigl([s_{1,0}(p):s_{1,1}(p)],\ldots,
[s_{n,0}(p):s_{n,1}(p)]\bigr)
\]
is an isomorphism with the property.
\end{theorem}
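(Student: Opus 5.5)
We will reduce the statement to the nef-complexity classification \cite[Theorem 4.2]{EnFiGoMoSvWaWa-Ne25}. Recall that for a pair \((X,\Delta)\) with \(-(K_X+\Delta)\) nef, one writes \(-(K_X+\Delta)\) as a sum \(\sum a_iL_i\) of nef divisors. The nef complexity is then \(\dim X+\rho(X)-\sum a_i\), minimised over such decompositions. The theorem asserts that if this complexity is \(<1\), or more precisely \(\le 0\) in the version we use, then \(X\) is a toric variety. Moreover, when the complexity vanishes and the \(L_i\) are nef with the stated properties, \(X\cong(\mathbb P^1)^n\) and the \(L_i\) are pulled back from the factors.

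\textbf{Step 1 (numerical data).} Since \(H^*(X;\mathbb Z)\) is generated in degree \(2\) by \(x_1,\dots,x_n\), we have \(b_1=0\) and \(H^2(X;\mathbb Z)\) is free of rank \(n\). Since \(H^{2,0}\) would contribute to \(H^2\) beyond algebraic classes only if \(h^{2,0}\neq0\), and the \(x_i\) are algebraic and span \(H^2(X;\mathbb Q)\), we get \(h^{2,0}=0\). Hence \(\operatorname{Pic}(X)\cong H^2(X;\mathbb Z)\) and \(\rho(X)=n\). Also \(-K_X\) has class \(c_1(TX)=2\sum x_i\), so \(-K_X\sim\sum_i 2L_i\) as line bundles, because \(\operatorname{Pic}\to H^2\) is injective when \(h^{0,1}=0\). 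With \(\Delta=0\) and coefficients \(a_i=2\), the complexity is \(n+n-2n=0\). So the classification applies and yields an isomorphism \(X\cong(\mathbb P^1)^n\).

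\textbf{Step 2 (identifying the \(L_i\)).} On \((\mathbb P^1)^n\) the nef cone is simplicial, spanned by the classes \(h_j=\operatorname{pr}_j^*c_1(\mathcal O(1))\). Write \(x_i=\sum_j m_{ij}h_j\) with \(m_{ij}\ge 0\) integers, by nefness. The relations \(x_i^2=0\) force each row to have at most one nonzero entry, since \(x_i^2=2\sum_{j<k}m_{ij}m_{ik}h_jh_k\). The condition \(\int x_1\cdots x_n=1\) then forces \(M=(m_{ij})\) to be a permutation matrix with entries \(1\). Composing \(\Phi\) with a permutation of factors gives \(L_i\cong\Phi^*\operatorname{pr}_i^*\mathcal O(1)\), again using injectivity of \(\operatorname{Pic}\to H^2\). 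Alternatively, we may cite the line-bundle part of \cite[Theorem 4.2]{EnFiGoMoSvWaWa-Ne25} directly. Now \(H^0((\mathbb P^1)^n,\operatorname{pr}_i^*\mathcal O(1))=H^0(\mathbb P^1,\mathcal O(1))\) by the Künneth formula, so it is \(2\)-dimensional and globally generated. The map built from any basis is then \(\Phi\) followed by an automorphism \(A_1\times\cdots\times A_n\) with \(A_i\in PGL_2\), hence an isomorphism with the stated property.

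\textbf{Main obstacle.} The main obstacle is matching hypotheses precisely with \cite[Theorem 4.2]{EnFiGoMoSvWaWa-Ne25}. That theorem is phrased with \(\mathbb Q\)-divisors, the Picard rank, and possibly a log canonical pair, and we must check that complexity exactly \(0\) with integer nef summands gives the product with the \(L_i\) as factor pullbacks, rather than only a toric variety. If the cited statement gives only ``\(X\) is toric with \(\rho=n\) and complexity \(0\)'', we would finish as follows. A smooth projective toric variety with \(\rho=n=\dim X\) and cohomology ring \(\mathbb Z[x]/(x_i^2)\) has \(2n\) rays. The vanishing \(x_i^2=0\) with \(x_i\) nef says each \(L_i\) defines a morphism to a curve, namely \(\mathbb P^1\). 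The product of these morphisms is finite, since \(\sum L_i\) is ample, because its top power is \(n!>0\) and it is nef and positive on curves. It is birational of degree \(\int x_1\cdots x_n=1\), hence an isomorphism by Zariski's main theorem.
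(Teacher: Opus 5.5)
Your route is the paper's: get $\rho(X)=n$ and $-K_X\sim 2\sum_i L_i$, feed the decomposition with all coefficients $2$ into \cite[Theorem 4.2]{EnFiGoMoSvWaWa-Ne25}, identify the $L_i$ on $(\mathbb P^1)^n$ from nefness and $x_i^2=0$, and read off the sections. Your Hodge-theoretic proof that $c_1\colon\operatorname{Pic}(X)\to H^2(X;\mathbb Z)$ is an isomorphism is correct, and so is Step~2. Using $\int_Xx_1\cdots x_n=1$ to force a permutation matrix is a valid replacement for the paper's primitivity-plus-distinctness argument.

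The gap is the one you flag yourself. The application of the classification theorem is asserted rather than verified, and your paraphrase of it is wrong in a way that matters.

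\textbf{Bigness is never checked.} Theorem 4.2 requires a generalised klt log Calabi--Yau pair $(X,B+M)$ with $B+M$ big. This holds here, but needs an argument. $M=2\sum_iD_i$ is nef, and $M^n=2^nn!>0$ by the relations $x_i^2=0$, so $M$ is big by \eqref{eq:nef-big}. The pair $(X,0+M)$ is generalised klt because $B_Z=\pi^*K_X-K_Z\le 0$ on every resolution $\pi\colon Z\to X$.

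\textbf{Vanishing nef complexity does not single out $(\mathbb P^1)^n$.} Every product $\prod_k\mathbb P^{d_k}$ has $c_X=0$; for example $\mathbb P^n$, with $-K=(n+1)H$. What the equality case of Theorem 4.2 actually yields is $X\cong\prod_{k=1}^{\ell}\mathbb P^{d_k}$, with each $D_j$ a factor hyperplane class. You must still argue that $\ell=\rho(X)=n=\sum_kd_k$ forces every $d_k=1$. You already have $\rho(X)=n$, so this is one line, but without it Step~1 does not conclude. The paper does this via $\sigma$: the bound $2n\le\min(n+\sigma,2n)$ gives $\sigma\ge n$, and every factor must appear among the $D_j$, since otherwise $\sum_j b_jD_j$ would vanish on a line in the missing factor, where $-K_X$ is positive.

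\textbf{The fallback's ampleness claim is unjustified.} Ampleness of $\sum_iL_i$ does not follow from ``nef, top power $n!$, and positive on curves''. Positivity on curves is not proved. Moreover, in dimension $\ge3$ Nakai--Moishezon needs $D^{\dim V}\cdot V>0$ for every subvariety $V$, not only for curves and $X$. The correct reason is that the $L_i$ are nef and span $N^1(X)_{\mathbb R}$. Hence $\sum_iL_i\equiv\varepsilon H+N$ with $H$ ample and $N$ nef, which is ample, as in Lemma~\ref{lem:fano}. This also shows $X$ is Fano, which would let you quote the Fano form of the classification directly and avoid the generalised-pair bookkeeping.
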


The cohomological hypotheses of Theorem~\ref{thm:main} are exactly what the following condition on the action provides.

\begin{definition}
\label{def:htw}
An action of \(\mathbb C^{\times}\) on a smooth connected projective variety satisfies the \emph{HTW condition} if it is effective, its fixed locus is nonempty and finite, and every weight of its tangent representation at every fixed point is \(1\) or \(-1\). \end{definition}

By the results of Hattori and of Tolman and Weitsman, recalled in Section~\ref{sec:htw-package}, an HTW variety satisfies the cohomological hypotheses of Theorem~\ref{thm:main} for suitable generators \(x_i\), and each \(x_i\) is the first Chern class of a unique line bundle \(L_i\). Having fixed the HTW generators, we call these the \emph{Hattori line bundles}. For an HTW variety the only remaining hypothesis of the product theorem is therefore nefness. We should say that in the language of Romano and Wi\'sniewski an HTW action is nothing but an \emph{equalised} action \cite[Definition 1.6]{RoWi-Ad22} whose fixed points are isolated. We keep the name HTW as a reminder of where the cohomological conditions come from.

The fixed points of an HTW action can be labelled \(p_I\) by the subsets \(I\subseteq\{1,\ldots,n\}\). We draw a directed edge \(p\to q\) when some nonconstant orbit tends to \(p\) as \(t\to0\) and to \(q\) as \(t\to\infty\), and we call an edge \emph{minimal} if its endpoints are not joined by a directed path with two or more edges (see Section~\ref{sec:graph}). Our second result tests nefness on these minimal edges.

\begin{theorem}
\label{thm:graph}
Let \(X\) satisfy the HTW condition, with fixed points labelled as in Theorem~\ref{thm:htw}. Suppose that every minimal directed edge has the form
\[
p_I\longrightarrow p_{I\cup\{j\}},\qquad j\notin I.
\]
Then every Hattori line bundle is nef. The isomorphism in Theorem~\ref{thm:main} can be chosen to be equivariant for the action
\[
t\cdot([z_{1,0}:z_{1,1}],\ldots,[z_{n,0}:z_{n,1}])
=([tz_{1,0}:z_{1,1}],\ldots,[tz_{n,0}:z_{n,1}]).
\]
\end{theorem}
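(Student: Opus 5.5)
Since every curve class is effective and every effective curve degenerates under the \(\mathbb C^{\times}\)-action to a cycle of orbit closures, the plan is to reduce nefness to a computation of degrees on orbit closures. This is exactly the content of the Romano--Wi\'sniewski criterion \cite[Theorem 2.8]{RoWi-Ad22}. For an equalised action, that criterion says the cone of curves is generated by the classes of closures of orbits corresponding to minimal edges of the orbit graph. Concretely, for an irreducible curve \(C\), the limit \(\lim_{t\to0}t\cdot C\) in the Chow variety is an effective cycle, algebraically equivalent to \(C\), supported on orbit closures. Any orbit closure from \(p\) to \(q\) through a non-minimal edge is numerically a sum of closures along a chain of edges, by the AM--vs--FM/Białynicki-Birula decomposition argument in \cite{RoWi-Ad22}. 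Hence it suffices to show \(L_i\cdot \overline{\mathbb C^{\times}x}\geq0\) for each orbit closure \(\overline{\mathbb C^{\times}x}\) joining \(p_I\) to \(p_{I\cup\{j\}}\).

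For this degree I would use equivariant localisation with the Tolman--Weitsman equivariant description recalled in Theorem~\ref{thm:htw}. The equivariant lift \(\tilde x_i\) of \(x_i=c_1(L_i)\) restricts to the fixed point \(p_I\) as \(\tilde x_i|_{p_I}=\epsilon_i(I)\,u\), where \(u\) generates \(H^*_{\mathbb C^{\times}}(\mathrm{pt})\) and \(\epsilon_i(I)\) is \(1\) or \(0\) according as \(i\in I\) or not (up to the normalisation used there). Since the weights are \(\pm1\), an orbit closure \(C\) is a smooth rational curve, with tangent weights \(\pm1\) at its two ends. The standard localisation formula on \(C\cong\mathbb P^1\) then gives \(\deg L_i|_C=\pm(\tilde x_i|_{p}-\tilde x_i|_{q})/u\). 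For the edge \(p_I\to p_{I\cup\{j\}}\), this yields \(\deg L_i|_C=\delta_{ij}\geq0\), with the sign fixed by the orientation convention. So all \(L_i\) are nef, and Theorem~\ref{thm:main} applies. I expect the main obstacle to be pinning down signs and normalisations consistently, namely which endpoint carries the \(1\) and the sign of the localisation formula relative to the \(t\to0\) and \(t\to\infty\) conventions. A secondary point is checking that the hypotheses of \cite[Theorem 2.8]{RoWi-Ad22} match: the action is equalised with isolated fixed points, and their reduction is to minimal edges in our sense.

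For equivariance, I would linearise each \(L_i\) \(\mathbb C^{\times}\)-equivariantly. This is possible since \(X\) is normal projective and \(\mathbb C^{\times}\) is a torus, and the linearisation can be normalised so that its weights at the fixed points are \(\tilde x_i|_{p_I}\). Then \(H^0(X,L_i)\), which is 2-dimensional by Theorem~\ref{thm:main}, splits into weight spaces. The two fixed points \(p_\emptyset\) and \(p_{\{i\}}\) have fibre weights differing by one. Since \(L_i\) is globally generated, \(H^0(X,L_i)\) surjects onto both fibres, so the weights \(0\) and \(1\) (after normalisation) both occur. Choosing weight vectors \(s_{i,0}\) of weight \(1\) and \(s_{i,1}\) of weight \(0\) as the basis, the map of Theorem~\ref{thm:main} satisfies \(\Phi(t\cdot p)=t\cdot\Phi(p)\) for the stated diagonal action, possibly after replacing \(t\) by \(t^{-1}\) in the labelling to match sign conventions.
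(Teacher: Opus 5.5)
Your outline matches the paper's proof in structure. Curves degenerate to effective cycles of orbit closures, edges are refined into chains of minimal edges in the acyclic orbit graph, degrees are read off normalised fibre weights, and equivariance comes from the weight decomposition of the two-dimensional spaces \(H^0(X,L_i)\). The nefness reduction differs only in bookkeeping. You telescope \(L\cdot C=\mu_L(p)-\mu_L(q)\) along a chain to reduce the test curves to minimal-edge orbit closures. The paper instead uses Lemma~\ref{lem:paths} to get \(I\subsetneq J\) on every edge, and then evaluates \eqref{eq:hattori-degree} on all orbit closures directly.

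The gap is the sign, which you leave open and attribute to an ``orientation convention''. It is not a convention, and the nefness claim rests entirely on it. As written, your computation gives only \(L_i\cdot C=\eta\,\delta_{ij}\) on minimal edges for some universal \(\eta\in\{\pm1\}\), and \(\eta=-1\) would make every \(L_i^{-1}\) nef instead. Nothing is left to choose. The edge direction (from the \(t\to0\) end to the \(t\to\infty\) end) and the labels (\(\lambda(p_I)=|I|\), so \(p_\varnothing\) is the source) are built into the hypothesis. Replacing any \(x_i\) by \(-x_i\) would violate \(c_1(TX)=2\sum_ix_i\). The sign of \(\tilde x_i|_{p_I}\) and the sign in your localisation formula each depend on the choice of generator \(u\), but their product does not, so they cannot be resolved independently. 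For instance, your normalisation \(\tilde x_i|_{p_I}=\mathbf 1_{\{i\in I\}}u\) is right only for the generator with which the formula reads \((\tilde x_i|_q-\tilde x_i|_p)/u\).

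The paper closes this step as follows. It fixes \(u\) so that weight \(m\) has equivariant first Chern class \(mu\), and takes \(a_i|_{p_I}=-u\) for \(i\in I\) from Theorem~\ref{thm:htw}(v). It then proves \(L\cdot C=\mu_L(p)-\mu_L(q)\) on the normalisation \(\mathbb P^1\), whose tangent weight at the \(t\to0\) end is \(+1\) whatever the weights of \(X\). Together these give \(\delta_{ij}\).

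Within your framework you can also settle \(\eta\) without any \(u\) or any sign quoted from Theorem~\ref{thm:htw}(v). The bundle \(\det TX\) with its induced linearisation has fibre weight \(n-2\lambda(p)\) at \(p\). So Lemma~\ref{lem:degree-weight} gives \(-K_X\cdot C=2\) on an orbit closure of an edge \(p_I\to p_{I\cup\{j\}}\), and \(-K_X\equiv2\sum_iL_i\) forces \(\sum_iL_i\cdot C=1\), that is, \(\eta=+1\).

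With this step supplied the argument is complete. The remaining sign hedge in your equivariance step is harmless, since exchanging \(s_{i,0}\) and \(s_{i,1}\) turns \([t^{-1}z_0:z_1]\) into \([tz_0:z_1]\).
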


The orbit graph controls the degrees of line bundles on curves, and the classification theorem then turns the resulting nef decomposition of the anticanonical class into an isomorphism. The condition has to be imposed on minimal edges only, since already on \((\mathbb P^1)^n\) there are orbits along which several coordinates move at once.

Finally, the algebraic counterpart of giving the spheres different areas is a choice of polarisation, that is, of an ample line bundle. Writing \(\mathcal O(a_1,\ldots,a_n)=\bigotimes_i\operatorname{pr}_i^*\mathcal O_{\mathbb P^1}(a_i)\), Theorem~\ref{thm:main} gives an isomorphism of polarised varieties
\[
\left(X,\bigotimes_{i=1}^n L_i^{\otimes a_i}\right)
\cong
\left((\mathbb P^1)^n,\mathcal O(a_1,\ldots,a_n)\right)
\]
for all integers \(a_1,\ldots,a_n>0\). The integers \(a_i\) change the polarisation but not the underlying variety.

%%%%%%%%%%%%
%%%%%%%%%%%%

\section{Generalities on the HTW condition}
\label{sec:htw-package}

\subsection{Tangent weights}

We collect here the consequences of the HTW condition that we need. We in essence recast all known results into algebraic statements.

Let \(X\) be a smooth connected complex projective variety of dimension \(n\) with an algebraic action of \(\mathbb C^{\times}\), and let \(X^{\mathbb C^{\times}}\) be its fixed locus. At a point \(p\in X^{\mathbb{\mathbb{C}^\times}}\) the tangent space splits into one-dimensional representations,
\[
T_pX=V_{p,1}\oplus\cdots\oplus V_{p,n},
\qquad
t\cdot v=t^{w_{p,j}}v
\quad
(t\in\mathbb C^{\times},\ v\in V_{p,j}),
\]
and the integers \(w_{p,j}\) are the \emph{tangent weights} at \(p\). Under the HTW condition of Definition~\ref{def:htw} we write
\[
\lambda(p)=\#\{j:w_{p,j}=-1\}
\]
for the number of negative weights at a fixed point \(p\). This integer will index the fixed points below.

The restriction on the weights forces the action to be free away from the fixed points, and this is what lets us parametrise orbit closures by projective lines in Section~\ref{sec:graph}.

\begin{lemma}
\label{lem:semifree}
If the action satisfies the HTW condition, then it is free outside its fixed locus.
\end{lemma}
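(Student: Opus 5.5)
We must show that the stabiliser of every non-fixed point is trivial. The plan is to use the Białynicki-Birula decomposition, or more simply the limit points of orbits, and to linearise the action near a fixed point.

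Let \(x\in X\setminus X^{\mathbb C^{\times}}\), and let \(\Gamma\subseteq\mathbb C^{\times}\) be its stabiliser. Since \(x\) is not fixed, the orbit is one-dimensional, so \(\Gamma\) is a proper closed subgroup, hence a finite cyclic group \(\mu_m\). We want \(m=1\).

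Because \(X\) is projective, the limit \(p=\lim_{t\to0}t\cdot x\) exists, and it is a fixed point. The subgroup \(\mu_m\) fixes \(x\), hence fixes every point \(t\cdot x\), hence fixes the closure of the orbit. In particular every point of \(\overline{\mathbb C^{\times}\cdot x}\) lies in the fixed locus \(X^{\mu_m}\) of \(\mu_m\). Now \(X^{\mu_m}\) is a smooth closed subvariety, since \(\mu_m\) is reductive. Its tangent space at \(p\) is the \(\mu_m\)-invariant part of \(T_pX\), namely
\[
\bigoplus_{j:\,m\mid w_{p,j}}V_{p,j}.
\]
By the HTW condition every \(w_{p,j}=\pm1\). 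So if \(m\geq2\), no weight is divisible by \(m\), and \(T_p(X^{\mu_m})=0\). Then \(p\) is an isolated point of \(X^{\mu_m}\).

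On the other hand, the orbit closure is an irreducible curve containing \(p\) and contained in \(X^{\mu_m}\). Hence the connected component of \(X^{\mu_m}\) through \(p\) has positive dimension, a contradiction. Therefore \(m=1\) and the action is free at \(x\).

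The only point needing care is the smoothness of \(X^{\mu_m}\) and the identification of its tangent space with the invariant subspace. This follows from linearising the finite group action at \(p\), via Luna's étale slice theorem or Cartan's linearisation lemma for finite groups in the analytic category.

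Alternatively, one can argue directly. Choose \(\mu_m\)-equivariant local analytic coordinates at \(p\) in which \(\mu_m\) acts linearly with weights \(w_{p,j}\bmod m\). Then the fixed set of \(\mu_m\) near \(p\) is the linear subspace of coordinates with \(m\mid w_{p,j}\), which is just \(\{p\}\) when \(m\geq2\).

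Effectiveness is not even needed for this step. It only guarantees that the generic stabiliser is trivial, which is already implied by the argument above.
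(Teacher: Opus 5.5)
Your proof is correct and is essentially the paper's argument. A nontrivial finite stabiliser \(\mu_m\) fixes the orbit closure pointwise, by commutativity, and hence the limit fixed point \(p\); but the weights \(\pm1\), together with linearisation of a finite group near a fixed point, make \(p\) an isolated point of \(X^{\mu_m}\), which is a contradiction.
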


\begin{proof}
Let \(x\in X\) be a point which is not fixed. Its stabiliser is a finite cyclic group \(\boldsymbol\mu_d=\{t\in\mathbb C^{\times}:t^d=1\}\), and we must show that \(d=1\). Suppose that \(d>1\) and choose \(\zeta\in\boldsymbol\mu_d\) with \(\zeta\ne1\). As \(\mathbb C^{\times}\) is abelian, \(\zeta\) fixes the orbit of \(x\) pointwise, and hence its closure, which contains the fixed point
\[
p=\lim_{t\to0}t\cdot x.
\]
All tangent weights at \(p\) are \(\pm1\), so \(\zeta\) acts on \(T_pX\) with eigenvalues \(\zeta^{\pm1}\), neither of which is one. Since a finite-order automorphism can be linearised near a fixed point, \(p\) is an isolated point of \(X^{\zeta}\). This contradicts the fact that \(X^{\zeta}\) contains the orbit closure through \(p\).
\end{proof}

\subsection{Attracting cells}

For a fixed point \(p\) let
\[
X^+(p)=
\left\{x\in X:\lim_{t\to0}t\cdot x=p\right\}.
\]
By the Bia\l ynicki-Birula decomposition \cite[Theorems 4.3 and 4.4]{Bi-Some73}, \(X\) is the disjoint union of these locally closed sets, and in our situation \(X^+(p)\cong\mathbb A^{n-\lambda(p)}\). Since \(X\) is irreducible, exactly one cell is open and dense. We denote its fixed point by \(p_{\varnothing}\), so that \(\lambda(p_{\varnothing})=0\) and
\[
X^+(p_{\varnothing})\cong\mathbb A^n.
\]
In particular \(X\) is rational. The Hodge numbers \(h^{j,0}\) are birational invariants, so \(H^0(X,\Omega_X^j)=0\) and hence \(H^j(X,\mathcal O_X)=0\) for \(j>0\). The exponential sequence then gives an isomorphism
\[
c_1:\operatorname{Pic}(X)
\xrightarrow{\ \cong\ }H^2(X;\mathbb Z),
\]
and every integral degree-two class occurring below is the first Chern class of a unique line bundle. We could equally have quoted \cite[Proposition 1.9]{RoWi-Ad22} here, which gives rationality and \(\operatorname{Pic}(X)\cong H^2(X;\mathbb Z)\) for any action whose source is a point, together with a basis of \(\operatorname{Pic}(X)\) made of closures of codimension-one cells.

\subsection{Cohomology and fixed-point labels}

Under the HTW hypothesis it follows that the action induced by the maximal compact subgroup \(S^1\subset \mathbb{C}^\times\) is Hamiltonian, this is already in \cite[Theorem 1]{ToWe-On00}, and for K\"ahler manifolds it goes back to Frankel \cite{Fr-Fi59}, who showed that a holomorphic circle action with a fixed point is Hamiltonian. Since we want to use their cohomological results we briefly recall the reason. Averaging a K\"ahler form over \(S^1\) gives an invariant K\"ahler form \(\omega\). If \(\xi\) generates the circle action, Cartan's formula shows that \(\iota_\xi\omega\) is closed, and it is exact because \(H^1(X;\mathbb R)=0\). Writing \(\iota_\xi\omega=-d\mu\), the function \(\mu\) is a moment map, so the circle action is Hamiltonian and the results of \cite{Ha-Sy92,ToWe-On00} apply.

We use the Borel construction \(H^*_{\mathbb C^{\times}}(X;\mathbb Z)=H^*(E\mathbb C^{\times}\times_{\mathbb C^{\times}}X;\mathbb Z)\), which is a module over \(H^*(B\mathbb C^{\times};\mathbb Z)=\mathbb Z[u]\). We fix the sign of \(u\) so that a one-dimensional representation of weight \(m\) has equivariant first Chern class \(mu\). An \emph{equivariant lift} of a class in \(H^*(X;\mathbb Z)\) is a preimage under the forgetful map to ordinary cohomology, and \(\alpha|_p\in\mathbb Z[u]\) denotes the restriction of an equivariant class \(\alpha\) to a fixed point \(p\). For the action \(t\cdot[z_0:z_1]=[tz_0:z_1]\) on \(\mathbb P^1\), the natural linearisation of \(\mathcal O(1)\) has fibre weight zero at \([0:1]\) and \(-1\) at \([1:0]\) (linearisations are recalled in Section~\ref{sec:graph}). This fixes the signs in the following statement.

The cohomology and Chern-class identities are Hattori's theorem \cite{Ha-Sy92}, in the form recalled in \cite[Theorem 1.2]{ToWe-On00}. The compatible fixed-point labels and equivariant lifts follow from \cite[Theorems 2, 1.1, and 1.2]{ToWe-On00} are as follows.

\begin{theorem}
\label{thm:htw}
Let \(X\) satisfy the HTW condition. The following statements hold.
\begin{enumerate}[(i),noitemsep]
\item There are exactly \(\binom nk\) fixed points \(p\) with \(\lambda(p)=k\).  In particular, \(X\) has \(2^n\) fixed points.
\item The fixed points can be labelled by subsets \(I\subset\{1,\ldots,n\}\), and \(\lambda(p_I)=|I|.\)
\item There are integral classes \(x_1,\ldots,x_n\in H^2(X;\mathbb Z)\) such that
\[
H^*(X;\mathbb Z)
\cong
\frac{\mathbb Z[x_1,\ldots,x_n]}
{(x_1^2,\ldots,x_n^2)},
\qquad
\int_Xx_1\cdots x_n=1.
\]
\item The total Chern class is \(c(TX)=\prod_{i=1}^n(1+2x_i)\). In particular, the first Chern class is
\[
c_1(TX)=2(x_1+\cdots+x_n).
\]
\item With the generator \(u\) fixed above, the classes \(x_i\) have equivariant lifts \(a_i\in H^2_{\mathbb C^{\times}}(X;\mathbb Z)\) satisfying
\[
a_i|_{p_I}=
\begin{cases}
-u,&i\in I,\\
0,&i\notin I,
\end{cases}
\]
with the convention that \(p_{\varnothing}\) is the point with all tangent weights positive.
\end{enumerate}
\end{theorem}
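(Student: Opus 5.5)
Theorem~\ref{thm:htw} is essentially a translation into the algebraic setting of the results of Hattori and of Tolman and Weitsman, so the plan is to check that their hypotheses hold and then match their conventions with ours. The starting point is the Hamiltonian reduction recalled just before the statement. The maximal compact \(S^1\subset\mathbb C^{\times}\) acts on \(X\) preserving an averaged K\"ahler form \(\omega\), and it has a moment map \(\mu\) because \(H^1(X;\mathbb R)=0\). By Lemma~\ref{lem:semifree} the action is semifree, and its fixed points are isolated and nonempty. The one check to make is that the complex tangent weights of the \(\mathbb C^{\times}\)-action agree with the symplectic isotropy weights of the circle action. This holds because \(\omega\) is compatible with the complex structure, so \(T_pX\) with its \(S^1\)-representation is the same complex representation. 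Thus \(M=X\) satisfies the hypotheses of \cite{Ha-Sy92,ToWe-On00}.

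For (i) and (ii), we use that \(\mu\) is a perfect Morse function whose index at \(p\) is \(2\lambda(p)\) (equivalently, the Bia\l ynicki-Birula cells \(X^+(p)\cong\mathbb A^{n-\lambda(p)}\)). Hence \(b_{2k}(X)\) counts the fixed points with \(\lambda=k\). Once (iii) is known, \(b_{2k}=\binom nk\), and (i) follows. Part (ii) is the labelling supplied by Tolman and Weitsman, whose equivariant theorem identifies the equivariant cohomology and the fixed-point data with those of \((S^2)^n\). Concretely, they produce a bijection between the fixed points and subsets \(I\) with \(\lambda(p_I)=|I|\), together with classes \(a_i\) whose restrictions are \(0\) or \(\pm u\) according as \(i\notin I\) or \(i\in I\). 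Parts (iii) and (iv) are then Hattori's theorem in the form of \cite[Theorem 1.2]{ToWe-On00}. Here \(x_i\) is the image of \(a_i\) under the forgetful map, and \(\int_X x_1\cdots x_n=1\) follows by the Atiyah--Bott localisation applied to \(a_1\cdots a_n\). Only \(p_{\{1,\ldots,n\}}\) contributes, with numerator \((-u)^n\) and Euler class \((-u)^n\).

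The main obstacle is the sign bookkeeping in (v). Tolman--Weitsman work with a moment map and an orientation convention for \(u\) that may differ from ours, where a weight-\(m\) representation has equivariant Chern class \(mu\) and \(p_\varnothing\) is the source. The approach is to compare with the model \(\mathbb P^1\). There, the linearised \(\mathcal O(1)\) has fibre weight \(0\) at \([0:1]\) (the source, with tangent weight \(+1\)) and weight \(-1\) at \([1:0]\), giving restrictions \(0\) and \(-u\). To fix conventions, we first replace \(u\) by \(-u\) if necessary, so that Tolman--Weitsman's restrictions match this model. Then, if needed, we replace \(a_i\) by \(a_i-a_i|_{p_\varnothing}\), which is allowed because constant shifts by multiples of \(u\) do not change \(x_i\). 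Finally, we check consistency with the ABBV relation: each \(a_i\) restricted to the weight data at \(p_I\) must satisfy GKM-type divisibility along the edges. With these normalisations, the values \(0\) for \(i\notin I\) and \(-u\) for \(i\in I\) are forced, and \(p_\varnothing\) is the point with all weights positive.
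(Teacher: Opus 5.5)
Your proposal is correct and follows the paper's route: you check that the compact circle acts semifreely and Hamiltonianly with the same weights, then import Hattori's and Tolman--Weitsman's results with signs matched against the model \(\mathbb P^1\), differing only in deducing (i) from the Betti numbers of the cell decomposition and getting \(\int_Xx_1\cdots x_n=1\) by localisation, where the paper uses \(c_n(TX)=2^nx_1\cdots x_n\) and \(\chi(X)=2^n\). One small correction: since every weight is \(\pm1\), GKM divisibility holds automatically for every degree-two class and forces nothing, so the sign \(-u\) in (v) comes from the Tolman--Weitsman identification with the model (and can be checked against \(c_1^{\mathbb C^{\times}}(TX)|_{p_I}=(n-2|I|)u\)), not from a choice of \(u\), which is already fixed.
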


The normalisation \(\int_Xx_1\cdots x_n=1\) also follows from (iv), since \(c_n(TX)=2^nx_1\cdots x_n\) and the Euler characteristic of \(X\) is \(2^n\), the number of cells. Theorem~\ref{thm:htw} is purely topological, however, and says nothing about whether the line bundles with first Chern classes \(x_i\) are nef.

\subsection{The Hirzebruch \(\mathbb{F}_2\)}

We workout an example explicitly to show that the topological information is unfortunately not sufficient for strong positivity. Let
\[
\mathbb F_2
=\mathbb P_{\mathbb P^1}
\bigl(\mathcal O_{\mathbb P^1}
\oplus\mathcal O_{\mathbb P^1}(2)\bigr)
\]
be the bundle of lines in \(\mathcal O_{\mathbb P^1}\oplus\mathcal O_{\mathbb P^1}(2)\). The following proposition shows that it has all the topological properties listed in Theorem~\ref{thm:htw}, while one of its Hattori classes fails to be nef.

\begin{proposition}
\label{prop:f2}
The surface \(\mathbb F_2\) admits a \(\mathbb C^{\times}\)-action satisfying the HTW condition, but
\[
\mathbb F_2\not\cong\mathbb P^1\times\mathbb P^1.
\]
Moreover, its cohomology has generators \(x_1,x_2\) satisfying all the ordinary-cohomology and Chern-class relations in Theorem~\ref{thm:htw}, while \(x_2\) is not nef.
\end{proposition}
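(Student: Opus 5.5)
I will construct the action explicitly and compute in the standard basis of \(H^2(\mathbb F_2;\mathbb Z)\).

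\emph{The action.} Let \(\mathbb C^{\times}\) act on \(\mathbb P^1\) by \(t\cdot[z_0:z_1]=[tz_0:z_1]\), and lift this to \(\mathcal O\oplus\mathcal O(2)\) by letting \(t\) act on the fibre coordinate of \(\mathcal O(2)\) with a chosen weight. The induced action on \(\mathbb F_2\) has four fixed points: the two endpoints of the base fibres over \(0\) and \(\infty\), on the negative section \(E\) and on a positive section \(S\). I will compute the two tangent weights at each point: one is the base weight \(\pm1\), and the other is the weight of the fibre direction, which is the twist weight plus or minus \(2\) times the base weight. One natural choice of twist weight should give fibre weights \(\pm1\) at all four points.

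Concretely, on the chart with base coordinate \(z=z_0/z_1\) and fibre coordinate \(w\), I take \(t\cdot(z,w)=(tz,t^{-1}w)\). On the chart at infinity the coordinates are \(z'=1/z\) and \(w'=z^{-2}w\) (up to convention), so \(t\cdot(z',w')=(t^{-1}z',t^{-3}w')\). That gives weight \(-3\), so I instead try \(t\cdot w=t^{c}w\) and solve. At \(0\) the weights are \(1\) and \(c\); at \(\infty\) they are \(-1\) and \(c-2\). The choice \(c=1\) gives \(\{1,1\}\) and \(\{-1,-1\}\) on that section and their negatives on the other section. Effectiveness and finiteness of the fixed locus are then immediate. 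So the first step is this chart computation; it is the only fiddly part, since the transition sign conventions must be handled consistently.

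\emph{Cohomology.} Let \(f\) be the fibre class and \(e=[E]\), with \(f^2=0\), \(ef=1\) and \(e^2=-2\). I will set \(x_1=f\) and \(x_2=e+f\). Then \(x_2^2=-2+2=0\), \(x_1^2=0\) and \(x_1x_2=1\), and \(H^*\) is the stated quotient because \(\mathbb F_2\) has a CW structure with cells in even degrees. For the canonical class, adjunction gives \(K=-2e-4f\), so \(c_1=2e+4f=2(x_1+x_2)\). Finally \(c_2=\chi=4=4x_1x_2\), hence \(c=(1+2x_1)(1+2x_2)\).

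\emph{Non-nefness and the non-isomorphism.} Since \(x_2\cdot E=e^2+ef=-1<0\), the class \(x_2\) is not nef. For \(\mathbb F_2\not\cong\mathbb P^1\times\mathbb P^1\), note that on \(\mathbb P^1\times\mathbb P^1\) every effective curve class \(a h_1+b h_2\) has \(a,b\ge0\), so its self-intersection \(2ab\) is at least \(0\). On \(\mathbb F_2\), by contrast, the curve \(E\) satisfies \(E^2=-2\). Alternatively, \(-K_{\mathbb F_2}\cdot E=0\), so \(\mathbb F_2\) is not Fano, whereas \(\mathbb P^1\times\mathbb P^1\) is.

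I expect the main obstacle to be the first step: getting the sign of the transition function right, so that the chosen fibre weight really gives \(\pm1\) at all four fixed points. The fallback is that solving \(c=\pm1\) and \(c-2=\pm1\) forces \(c=1\) under either convention.
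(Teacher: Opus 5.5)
Your proposal is correct and follows essentially the same route as the paper. Your chart weight \(c=1\) gives exactly the paper's action, which the paper describes through the linearisation of \(\mathcal O\oplus\mathcal O(2)\) and \(\operatorname{Hom}(\ell,E_b/\ell)\) rather than coordinates. The rest matches as well: the generators \(x_1=f\), \(x_2=e+f\), adjunction for \(K\), the Euler characteristic for \(c_2\), and the negative section for both non-nefness and \(\mathbb F_2\not\cong\mathbb P^1\times\mathbb P^1\).

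There are two minor slips. First, under the opposite transition convention \(w'=z^{2}w\) the equations force \(c=-1\), not \(c=1\), though a valid choice exists either way. Second, the even-cell structure gives only the additive cohomology, so add that \(e,f\) form an integral basis of \(H^2\) because their intersection matrix has determinant \(-1\), or cite the projective bundle formula as the paper does.
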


\begin{proof}
Let \(\mathbb C^{\times}\) act on the base by \(t\cdot[z_0:z_1]=[tz_0:z_1]\), with fixed points \(0=[0:1]\) and \(\infty=[1:0]\) of tangent weights \(1\) and \(-1\), respectively. We give the trivial summand \(\mathcal O\) the constant fibre weight \(-1\), and we linearise \(\mathcal O(2)\) with fibre weights \(0\) over \(0\) and \(-2\) over \(\infty\). The induced action on the projective bundle has four fixed points, namely the summand lines in the two fixed fibres. Writing \(E=\mathcal O\oplus\mathcal O(2)\), the vertical tangent space at a line \(\ell\subset E_b\) is \(\operatorname{Hom}(\ell,E_b/\ell)\), whose weight is the weight of \(E_b/\ell\) minus that of \(\ell\). This gives the following table.
\[
\begin{array}{c|c|c|c}
\text{fixed point}
&\text{base weight}
&\text{vertical weight}
&\text{tangent weights}\\ \hline
(0,[\mathcal O])&1&0-(-1)=1&(1,1)\\
(0,[\mathcal O(2)])&1&-1-0=-1&(1,-1)\\
(\infty,[\mathcal O])&-1&-2-(-1)=-1&(-1,-1)\\
(\infty,[\mathcal O(2)])&-1&-1-(-2)=1&(-1,1).
\end{array}
\]
Every tangent weight is therefore \(\pm1\), and the action is effective because it is effective on the base, so the HTW condition holds.

Let \(S\subset\mathbb F_2\) be the section given by the subbundle \(\mathcal O(2)\), and let \(F\) be a fibre. The normal bundle of \(S\) is \(\operatorname{Hom}(\mathcal O(2),\mathcal O)\cong\mathcal O_{\mathbb P^1}(-2)\), so
\[
S^2=-2,
\qquad S\cdot F=1,
\qquad F^2=0.
\]
On \(\mathbb P^1\times\mathbb P^1\) every effective curve has class \(aH_1+bH_2\) with \(a,b\geq0\), and hence self-intersection \(2ab\geq0\). The curve \(S\) therefore shows that \(\mathbb F_2\) is not isomorphic to the product.

Writing \(S\) and \(F\) also for their classes in \(H^2(\mathbb F_2;\mathbb Z)\), the projective bundle formula gives
\[
H^*(\mathbb F_2;\mathbb Z)
\cong
\frac{\mathbb Z[S,F]}{(F^2,S^2+2SF)},
\]
and the classes \(x_1=F\) and \(x_2=S+F\) satisfy
\[
x_1^2=x_2^2=0,
\qquad
\int_{\mathbb F_2}x_1x_2=1.
\]
To find the canonical class, write \(K_{\mathbb F_2}=aS+bF\). The adjunction formula \cite[Chapter V, Proposition 1.5]{Hr-Al77} on a fibre gives \(K_{\mathbb F_2}\cdot F=-2\), so \(a=-2\). On \(S\cong\mathbb P^1\) it gives \((K_{\mathbb F_2}+S)\cdot S=-2\), so \(K_{\mathbb F_2}\cdot S=0\) and \(b=-4\). Hence \(-K_{\mathbb F_2}=2S+4F\) and
\[
c_1(T\mathbb F_2)=2(x_1+x_2).
\]
Since \(\mathbb F_2\) is a \(\mathbb P^1\)-bundle over \(\mathbb P^1\), its Euler characteristic is four, and as \(x_1x_2\) has integral one we get
\[
c(T\mathbb F_2)=1+2(x_1+x_2)+4x_1x_2=(1+2x_1)(1+2x_2).
\]
These are the same identities as on the product. However, the second class has negative degree on the section,
\[
x_2\cdot S=(S+F)\cdot S=-1,
\]
so \(x_2\) is not nef. The anticanonical class also has degree zero on this curve,
\[
(-K_{\mathbb F_2})\cdot S=(2S+4F)\cdot S=0.
\]
An ample line bundle has strictly positive degree on every curve. \end{proof}

Thus \(\mathbb F_2\) is not \emph{Fano}, i.e. \(-K_{\mathbb{F}_2}\) is ample. In particular, the HTW condition alone does not make a variety Fano.

\begin{remark}
One might instead ask whether it is enough for some positive combination of the Hattori classes to be ample. The same example shows that it is not. If \(a,b\in\mathbb Z\) with \(a>0\) and \(b>2a\), then the line bundle \(\mathcal O_{\mathbb F_2}(aS+bF)\) has first Chern class \(aS+bF=(b-a)x_1+ax_2\), a positive combination of the two HTW generators, and it is ample. To see this, put \(A=aS+bF\). Then \(A^2=2a(b-a)>0\) and \(A\cdot S=b-2a>0\). Any other curve \(C\), of class \(cS+dF\), satisfies \(c=C\cdot F\geq0\) because the fibre class is nef, and \(d-2c=C\cdot S\geq0\) because distinct integral curves on a surface meet non-negatively. As \((c,d)\neq(0,0)\), we get \(A\cdot C=ad+(b-2a)c>0\), and the Nakai-Moishezon criterion \cite[Theorem 1.2.23]{La-Po04} shows that \(A\) is ample. So the total class can be ample even though \(x_2\) is not nef, and the hypothesis in Theorem~\ref{thm:main} has to concern the individual factor classes.
\end{remark}

Nor can the obstruction be removed by choosing different generators. If \(v=ax_1+bx_2\) is primitive and \(v^2=2ab=0\), then \(v\) is one of \(\pm x_1,\pm x_2\), and the only pair of such classes with sum \(c_1(T\mathbb F_2)/2=x_1+x_2\) is \(x_1,x_2\), in one order. The non-nef class is therefore forced on us by the first Chern class.

\section{The classification theorem}
\label{sec:numerical}

In this section we state the classification theorem of Enwright and collaborators in the form we use, and we check its generalised-pair hypothesis in the smooth case.

\subsection{Numerical divisor classes}

We write \(D\equiv D'\) when two Cartier divisors have the same degree on every curve, \([D]\) for the numerical class of \(D\), \(N^1(X)_{\mathbb R}\) for the real vector space of numerical classes, and \(\rho(X)=\dim_{\mathbb R}N^1(X)_{\mathbb R}\) for the Picard number. Additive formulas involving line bundles refer to the corresponding divisor classes, and line bundles with equal first Chern classes are numerically equivalent. A rational Cartier divisor \(D\) is nef if \(D\cdot C\geq0\) for every curve \(C\), and big if a positive integral multiple of it is a big Cartier divisor.

Within the nef cone, bigness is detected by a single intersection number, and we will use this twice. For a rational Cartier divisor \(D\) on a smooth projective variety of dimension \(n\),
\refstepcounter{equation}
\[
\tag{\theequation}
\label{eq:nef-big}
D\text{ is nef and big}\quad\Longleftrightarrow\quad
D\text{ is nef and }D^n>0,
\]
see \cite[Theorem 2.2.16]{La-Po04} and \cite[Remark 2.2(2)]{EnFiGoMoSvWaWa-Ne25}. For example, on a product of \(n\geq2\) projective lines each factor hyperplane bundle is nef but not big, while their sum is big.

\subsection{The equality case}

Following \cite[Definition 1.2]{EnFiGoMoSvWaWa-Ne25}, the \emph{total index} of a smooth projective variety \(X\) with nef and numerically nonzero anticanonical class is
\[
\tau_X=
\sup\left\{\sum_{j=1}^r b_j:
\begin{array}{l}
-K_X\equiv\sum_{j=1}^r b_jD_j,\quad b_j\in\mathbb Q_{>0},\\
D_j\text{ nef Cartier},\quad D_j\not\equiv0
\end{array}\right\},
\]
and its \emph{nef complexity} is \(c_X=\dim X+\rho(X)-\tau_X\). The divisors are required to be Cartier, since rescaling them by positive rationals would otherwise change the coefficient sum.

We will use \cite[Theorem 4.2]{EnFiGoMoSvWaWa-Ne25}, which we restate in our notation. A generalised klt log Calabi-Yau pair \((Y,B+M)\) consists of an effective real boundary divisor \(B\) and a nef part \(M\) such that all generalised log discrepancies are positive and \(K_Y+B+M\equiv0\). We recall these notions after the statement, in the only case we need.

\begin{theorem}
\label{thm:nef-complexity}
Let \((Y,B+M)\) be a generalised klt log Calabi-Yau pair, where \(Y\) is a normal projective variety of dimension \(n\geq1\). Suppose that \(D_1,\ldots,D_r\) are numerically nonzero nef Cartier divisors on \(Y\), and that \(b_1,\ldots,b_r\) are positive real numbers satisfying
\[
M=\sum_{j=1}^r b_jD_j.
\]
Assume that \(B+M\) is big. Let \(W\) be the real span of the numerical divisor classes, and write \(\sigma\) for its dimension,
\[
W=\operatorname{span}_{\mathbb R}\{[D_1],\ldots,[D_r]\}
\subseteq N^1(Y)_{\mathbb R},
\qquad
\sigma=\dim_{\mathbb R}W.
\]
Then the coefficient sum satisfies
\[
\sum_{j=1}^r b_j\leq\min\{n+\sigma,2n\}.
\]
If this bound is attained, all the following conclusions hold.
\begin{enumerate}[(i),noitemsep]
\item The numerical span satisfies \(n+\sigma\leq2n\).

\item There are positive integers \(d_1,\ldots,d_{\ell}\), with \(\sum_{k=1}^{\ell}d_k=n\), and an algebraic isomorphism
\[
\Psi:Y\xrightarrow{\ \cong\ }\prod_{k=1}^{\ell}\mathbb P^{d_k}.
\]
Write \(\operatorname{pr}_k\) for projection onto the \(k\)-th factor.

\item The boundary vanishes, and the decomposition is numerically anticanonical,
\[
B=0,
\qquad
K_Y+\sum_{j=1}^r b_jD_j\equiv0.
\]

\item Each divisor represents a factor hyperplane bundle. More precisely, for every \(j\in\{1,\ldots,r\}\), there is an index \(k(j)\in\{1,\ldots,\ell\}\) such that
\[
\mathcal O_Y(D_j)
\cong
\Psi^*\operatorname{pr}_{k(j)}^*
\mathcal O_{\mathbb P^{d_{k(j)}}}(1).
\]
\end{enumerate}
\end{theorem}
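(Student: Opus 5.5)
The plan is to treat Theorem~\ref{thm:nef-complexity} as a restatement of \cite[Theorem 4.2]{EnFiGoMoSvWaWa-Ne25}. The work is to check that our notation matches theirs, and to recall why the statement holds so that items (i)--(iv) can be read off.

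\emph{Dictionary.} The plan first matches the hypotheses and conclusions with those of \cite{EnFiGoMoSvWaWa-Ne25}.
\begin{enumerate}[(a),noitemsep]
\item Our generalised klt log Calabi--Yau pair with \(M=\sum b_jD_j\) is exactly their setting: the nef part is decomposed into numerically nonzero nef Cartier divisors, and \(B+M\) is big.
\item Our \(\sigma\) is the dimension of the span of the \([D_j]\) in \(N^1(Y)_{\mathbb R}\), which is their relative Picard-type invariant in the decomposition.
\item Their inequality \(\sum b_j\leq n+\sigma\) (a nef-complexity bound \(c\geq 0\) relative to \(W\)), combined with the trivial bound \(\sigma\leq n\) in the equality case, gives \(\min\{n+\sigma,2n\}\).
\end{enumerate}
I would state the translation explicitly. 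In particular, \(-(K_Y+B)\equiv M\) is big and \(B+M\) is big, so \(Y\) is of Fano type, and the Cone Theorem and MMP are available.

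\emph{Why the bound holds.} I would recall the mechanism by induction on \(\sigma\).
\begin{enumerate}[(a),noitemsep]
\item Since \(Y\) is of Fano type, the face of \(\overline{NE}(Y)\) orthogonal to a suitable nef \(D_j\) contains a \((K_Y+B)\)-negative extremal ray \(R\). By Kawamata's length bound, \(R\) is spanned by a rational curve \(C\) with \(-(K_Y+B)\cdot C\leq 2n\), or \(\leq n+1\) for a fibre-type ray relative to its fibre dimension.
\item Intersecting \(M=\sum b_jD_j\) with \(C\), and using \(D_j\cdot C\geq1\) for the divisors not vanishing on \(R\), bounds the coefficients of those divisors.
\item The divisors vanishing on \(R\) descend through the contraction. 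We then pass to the contraction, or to a general fibre, where \(\sigma\) drops, and apply induction.
\end{enumerate}

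\emph{Equality case and main obstacle.} Attaining the bound forces every inequality above to be sharp, and extracting the structure from this is the main obstacle. Sharpness means the following.
\begin{enumerate}[(a),noitemsep]
\item Each extremal contraction met is of fibre type, with fibres of dimension \(d\) containing rational curves of anticanonical degree exactly \(d+1\).
\item \(B\) has degree zero on all covering families, which forces \(B=0\) and hence gives (iii).
\end{enumerate}
By the Cho--Miyaoka--Shepherd-Barron/Kebekus characterisation, these fibres are \(\mathbb P^{d}\). Sharpness also forces the contraction to be a \(\mathbb P^d\)-bundle that splits as a product, because the remaining \(D_j\) pull back from the base. Induction then yields \(\Psi\) in (ii), and we get \(n+\sigma\leq 2n\), which is (i).

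For (iv), each \(D_j\) is numerically nonzero and nef. Together with \(\sum b_j=n+\sigma\), this forces \(D_j\) to be numerically the pullback of \(\mathcal O(1)\) from a single factor. On a product of projective spaces, numerical and linear equivalence agree, which gives the isomorphism of line bundles. I expect the delicate points to be the splitting of the bundle and the exclusion of divisorial or small contractions in the equality case. Here I would follow the cited argument rather than reprove it.
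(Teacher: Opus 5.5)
Your route is the paper's. The paper gives no argument for this theorem beyond restating \cite[Theorem 4.2]{EnFiGoMoSvWaWa-Ne25} in its own notation, so a citation is all that is required, and your sketch of the mechanism is not needed.

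\textbf{The error in dictionary item (c).} The dictionary is the only part of your proposal doing real work, and item (c) is wrong. The bound \(\sum_j b_j\leq 2n\) does not follow from \(\sum_j b_j\leq n+\sigma\) plus a ``trivial'' inequality \(\sigma\leq n\), because no such inequality holds in general: \(\sigma\) is bounded only by \(\rho(Y)\). For instance, let \(Y\) be the blow-up of \(\mathbb P^2\) at two points, with \(H\) the pullback of a line and \(E_1,E_2\) the exceptional curves. Then \(B=0\) and \(-K_Y=H+(H-E_1)+(H-E_2)\) is an admissible decomposition into nef Cartier divisors spanning \(N^1(Y)_{\mathbb R}\). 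Here \(\sigma=3>2=n\), and \(n+\sigma=5\) says nothing about the asserted bound \(2n=4\). Nor can a property of the equality case be used to prove the inequality in general. The two halves of \(\min\{n+\sigma,2n\}\) are independent assertions, and both must be quoted from the cited theorem. Likewise, (i) is not trivial. It is exactly the input the paper needs in the proof of Theorem~\ref{thm:classification} to get \(\sigma\leq n\). In the equality case it follows from (ii) and (iv), since every \([D_j]\) is one of the \(\ell\leq n\) factor classes.

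\textbf{Two smaller inaccuracies.}
\begin{enumerate}[(1),noitemsep]
\item Only \(B+M\) is assumed big, and \(M\equiv-(K_Y+B)\) need not be. On \(\mathbb P^1\times\mathbb P^1\), take \(M=2\operatorname{pr}_1^*\mathcal O_{\mathbb P^1}(1)\) and \(B\) equal to half the sum of four distinct fibres of \(\operatorname{pr}_2\); then \(M\) is not big. \(Y\) is still of Fano type, but the reason is bigness of \(B+M\) combined with the generalised klt condition, not bigness of \(M\).
\item In your mechanism, Kawamata's length bound \(-(K_Y+B)\cdot C\leq 2n\) is too weak to give \(n+\sigma\). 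Already for \(\sigma=1\) you need a curve of anticanonical degree at most \(n+1\) on which the \(D_j\) have positive degree.
\end{enumerate}
Since you defer to the citation anyway, either mark that sketch as heuristic or drop it.
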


We apply this theorem with vanishing boundary and coefficient sum \(2n\). The following Theorem restricts the hypothesis to the smooth case at hand and the special equality in the sum above.

\begin{theorem}
\label{thm:classification}
Let \(Y\) be a smooth connected projective variety of dimension \(n\geq1\). Let \(D_1,\ldots,D_r\) be nef Cartier divisors which are not numerically trivial, and let \(b_1,\ldots,b_r\in\mathbb Q_{>0}\). Suppose that
\[
-K_Y\equiv M:=\sum_{j=1}^r b_jD_j,
\qquad M\text{ is big},
\qquad \sum_{j=1}^r b_j=2n.
\]
Then \(Y\cong(\mathbb P^1)^n\).
\end{theorem}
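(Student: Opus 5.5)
We apply Theorem~\ref{thm:nef-complexity} to the generalised pair \((Y,0+M)\), with boundary \(B=0\) and nef part \(M=\sum_j b_jD_j\).

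The first step is to check that this is a generalised klt log Calabi--Yau pair. The Calabi--Yau condition \(K_Y+M\equiv0\) is exactly the hypothesis \(-K_Y\equiv M\). The nef part is nef, since it is a positive rational combination of nef Cartier divisors. It descends to \(Y\) itself, so we may take the trivial model \(Y'=Y\) with \(M'=M\). Since \(Y\) is smooth and the boundary is zero, the generalised log discrepancies are ordinary log discrepancies of \((Y,0)\). Indeed, for any birational model \(f:Y''\to Y\) the pullback of \(M\) is nef and equals the trace of the nef part there, so the correction term vanishes. For a smooth variety every exceptional divisor \(E\) over \(Y\) has \(a(E,Y,0)\geq 2>0\), since the minimal discrepancy of a smooth variety is \(\operatorname{codim}-1\geq1\). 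This is the one point where some care with the definition of generalised pairs is needed: one must confirm that "nef on a model where it descends" is satisfied trivially here. I expect this to be the only real obstacle, and it is definitional rather than substantive. Bigness of \(B+M=M\) is assumed.

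Next, the inequality of Theorem~\ref{thm:nef-complexity} gives
\[
2n=\sum_j b_j\leq\min\{n+\sigma,2n\}\leq 2n,
\]
so the bound is attained. This also forces \(\sigma\geq n\), although we do not need that. Conclusion (ii) then gives an isomorphism \(Y\cong\prod_{k=1}^{\ell}\mathbb P^{d_k}\) with \(\sum_k d_k=n\).

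It remains to show that every \(d_k=1\). By (iii) and (iv), each \(D_j\) is numerically a pulled-back hyperplane class \(H_{k(j)}\), and \(-K_Y\equiv\sum_j b_j H_{k(j)}\). On the product, \(-K=\sum_k (d_k+1)H_k\), and the classes \(H_k\) are linearly independent in \(N^1\). Comparing coefficients gives \(\sum_{j:k(j)=k}b_j=d_k+1\) for each \(k\). Summing over \(k\) yields \(2n=n+\ell\), so \(\ell=n\). Together with \(\sum_k d_k=n\) and \(d_k\geq1\), this forces all \(d_k=1\), hence \(Y\cong(\mathbb P^1)^n\).
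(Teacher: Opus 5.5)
Your proposal is correct and follows the paper's route. You check that \((Y,0+M)\) is a generalised klt log Calabi--Yau pair using smoothness, apply Theorem~\ref{thm:nef-complexity} in its equality case, and then force every \(d_k=1\). The only difference is the last step: the paper combines \(\sigma\geq n\), conclusion (i), and the observation that every factor must occur to get \(\ell=\sigma=n\), whereas you compare coefficients of \(-K_Y\equiv\sum_k(d_k+1)H_k\) in the factor hyperplane classes and sum to get \(2n=n+\ell\) directly, which avoids \(\sigma\) entirely.
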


For us the nef part of a generalised pair is already a divisor on \(Y\), which keeps the birational formalism to a minimum. Let \(Y\) be smooth, let \(B\) be an effective rational divisor, and let \(M\) be a nef rational Cartier divisor on \(Y\). The nef part is the Cartier closure of \(M\), that is, \(M_Z=\pi^*M\) for every projective birational morphism \(\pi:Z\to Y\) from a smooth variety \(Z\). The divisor \(B_Z\) is then defined by
\refstepcounter{equation}
\[
\tag{\theequation}
\label{eq:generalised-boundary}
K_Z+B_Z+M_Z=\pi^*(K_Y+B+M),
\]
and the \emph{generalised log discrepancy} of a prime divisor \(E\subset Z\) is \(a_E(Y,B+M)=1-\operatorname{coeff}_E(B_Z)\). The pair is \emph{generalised klt} if all these numbers are positive, including for divisors on \(Y\), and it is a \emph{generalised klt log Calabi-Yau pair} if moreover \(K_Y+B+M\equiv0\). These are the definitions of \cite[Definitions 2.3, 2.4, and 2.5]{EnFiGoMoSvWaWa-Ne25} in our special case.

When \(B=0\), substituting \(M_Z=\pi^*M\) in \eqref{eq:generalised-boundary} gives
\[
B_Z=\pi^*K_Y-K_Z.
\]
The Jacobian determinant of \(\pi\) is a nonzero section of \(K_Z\otimes\pi^*K_Y^{-1}\), so \(K_Z-\pi^*K_Y\) is effective and every coefficient of \(B_Z\) is non-positive. Hence
\[
a_E(Y,0+M)\geq1>0,
\]
and \((Y,0+M)\) is a generalised klt log Calabi-Yau pair as soon as \(K_Y+M\equiv0\). The coefficients of \(M\) play no role in this, as \(M\) is not a boundary, so a coefficient equal to \(2\) is harmless.

\begin{proof}[Proof of Theorem \ref{thm:classification}]
Let
\[
W=\operatorname{span}_{\mathbb R}([D_1],\ldots,[D_r])
\subseteq N^1(Y)_{\mathbb R},
\qquad \sigma=\dim_{\mathbb R}W.
\]
By the discussion above, \((Y,0+M)\) is a generalised klt log Calabi-Yau pair, so Theorem~4.2 of \cite{EnFiGoMoSvWaWa-Ne25} gives
\[
2n=\sum_{j=1}^r b_j\leq\min(n+\sigma,2n),
\]
and therefore \(\sigma\geq n\). Theorem \ref{thm:nef-complexity} above of now gives \(\sigma\leq n\), an isomorphism with the  product \(Y\cong\prod_{k=1}^{\ell}\mathbb P^{d_k}\), and identify each of the \(D_j\) with a factor hyperplane class. Every factor must occur among these classes, for otherwise \(\sum_j b_jD_j\) would have degree zero on a line in the missing factor, where \(-K_Y\) has degree \(d_k+1>0\). Consequently \(\ell=\sigma=n\), and since \(\sum_kd_k=n\) with every \(d_k\geq1\), all the \(d_k\) are one.
\end{proof}

\begin{remark}
Under a Fano hypothesis, Theorem~1.6 of the same paper gives the coefficient bound and the product conclusion directly, and in our application \(X\) is shown to be Fano before the classification is used (Lemma~\ref{lem:fano}). We keep the formulation via Theorem~4.2 because it needs only a nef and big anticanonical decomposition.
\end{remark}

%%%%%%%%%%%%%%%
\section{Proof of the nef product theorem}
\label{sec:main-proof}

Throughout this section \(X\), \(L_1,\ldots,L_n\), and \(x_1,\ldots,x_n\) satisfy the hypotheses of Theorem~\ref{thm:main}. We choose divisors \(D_i\) with \(\mathcal O_X(D_i)\cong L_i\), which are unique up to linear equivalence, and put
\[
D=D_1+\cdots+D_n.
\]

\subsection{Intersections and positivity}

The relations \(x_i^2=0\) determine all the intersection numbers of the \(D_i\), and the next lemma states the ones we need.

\begin{lemma}
\label{lem:intersections}
The divisors \(D_i\) are nef and numerically nonzero, their numerical classes are independent, and
\[
D^n=n!,\qquad -K_X\equiv2D.
\]
In fact \(\rho(X)=n\).
\end{lemma}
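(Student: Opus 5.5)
The plan is to read off every claim from the cohomology ring in Theorem~\ref{thm:main}, using that intersection numbers of divisors on \(X\) are computed by cup products of first Chern classes. Nefness of the \(D_i\) is a hypothesis. For \(D^n\) we expand \((x_1+\cdots+x_n)^n\) in the ring \(\mathbb Z[x_1,\ldots,x_n]/(x_1^2,\ldots,x_n^2)\). Every monomial containing a repeated factor vanishes, so only the \(n!\) orderings of \(x_1\cdots x_n\) survive, and \(\int_X x_1\cdots x_n=1\) gives \(D^n=n!\).

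For the anticanonical class we use that \(c_1(TX)=c_1(-K_X)\) equals \(2\sum x_i=c_1(\mathcal O_X(2D))\). Line bundles with equal first Chern classes are numerically equivalent, as noted in the section on numerical classes, so \(-K_X\equiv 2D\).

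For numerical nontriviality and independence we test against the curve classes dual to the \(x_i\). For each \(i\), the class \(y_i=\prod_{k\neq i}x_k\in H^{2n-2}(X;\mathbb Z)\) satisfies \(\int_X x_j y_i=\delta_{ij}\). Suppose \(\sum_j c_j[D_j]=0\) in \(N^1(X)_{\mathbb R}\). Then \(\sum_j c_j D_j\) has degree zero on every curve, and hence zero intersection with every product of divisor classes \(D_{k_1}\cdots D_{k_{n-1}}\). Such a product is a \(\mathbb Z\)-combination of curve classes, for instance by moving to ample combinations and taking complete intersections, or by expressing each \(D_k\) as a difference of very ample divisors. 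Pairing with \(y_i\) then gives \(c_i=0\). This shows that the \([D_i]\) are independent and, in particular, nonzero.

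The step needing the most care is this passage from \(\sum c_jD_j\equiv0\) to vanishing against \(y_i\). I would handle it cleanly with the standard fact that a numerically trivial divisor \(N\) satisfies \(N\cdot H_1\cdots H_{n-1}=0\) for all Cartier \(H_k\), since intersection numbers are multilinear and determined by numerical classes. Writing \(y_i=D_1\cdots\widehat{D_i}\cdots D_n\) as an intersection product then suffices.

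Finally, \(\rho(X)\le\operatorname{rank}H^2(X;\mathbb Z)=n\), because \(N^1(X)_{\mathbb R}\) is a quotient of \(\operatorname{Pic}(X)\otimes\mathbb R\), which maps into \(H^2(X;\mathbb R)\cong\mathbb R^n\) via \(c_1\). Numerical equivalence factors through \(c_1\), since degrees on curves are pairings with homology classes. The independence of the \(n\) classes \([D_i]\) gives the reverse inequality, so \(\rho(X)=n\).
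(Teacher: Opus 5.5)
Your proof is correct and follows the paper's argument: the same expansion gives \(D^n=n!\), the same comparison of first Chern classes gives \(-K_X\equiv2D\), and pairing with the dual one-cycles \(\prod_{k\ne j}D_k\) gives independence. The only difference is cosmetic: you bound \(\rho(X)\le n\) by \(\dim_{\mathbb R}H^2(X;\mathbb R)\), whereas the paper shows directly that the \([D_i]\) span \(N^1(X)_{\mathbb R}\) because the \(x_i\) form a basis of \(H^2(X;\mathbb Z)\), and both rest on numerical equivalence factoring through \(c_1\).
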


\begin{proof}
The \(D_i\) are nef because the \(L_i\) are. For \(1\leq i,j\leq n\), the relations \(x_i^2=0\) and \(\int_Xx_1\cdots x_n=1\) give
\refstepcounter{equation}
\[
\tag{\theequation}
\label{eq:dual-intersections}
D_i\prod_{k\ne j}D_k
=\begin{cases}1,&i=j,\\0,&i\ne j,\end{cases}
\]
where for \(n=1\) the empty product is the fundamental cycle \([X]\). Pairing a relation \(\sum_ia_i[D_i]=0\) with the one-cycle \(\prod_{k\ne j}D_k\) gives \(a_j=0\), so the classes \([D_i]\) are independent and in particular nonzero. Since \(x_1,\ldots,x_n\) is a basis of \(H^2(X;\mathbb Z)\), every line bundle \(L\) has \(c_1(L)=\sum_ia_ix_i\) with \(a_i\in\mathbb Z\), and then \(L\equiv\sum_ia_iD_i\). Thus the \([D_i]\) span \(N^1(X)_{\mathbb R}\) and \(\rho(X)=n\). In the expansion of \(D^n\) every monomial with a repeated index vanishes, and each of the remaining \(n!\) monomials equals one, so \(D^n=n!\). Finally,
\[
c_1(-K_X)=c_1(TX)=2\sum_i x_i
=c_1\!\left(\bigotimes_iL_i^{\otimes2}\right),
\]
which gives \(-K_X\equiv2D\).
\end{proof}

In particular the anticanonical class satisfies the bigness hypothesis of Theorem~\ref{thm:classification}.

\begin{lemma}
\label{lem:big}
The anticanonical divisor is nef and big.
\end{lemma}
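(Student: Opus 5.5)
We need to show that \(-K_X\) is nef and big. By Lemma~\ref{lem:intersections} we have \(-K_X\equiv2D\) with \(D=D_1+\cdots+D_n\) and \(D^n=n!\), and the plan is to derive both properties from these two facts.

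First, nefness. Each \(D_i\) is nef because \(L_i\) is nef. For any curve \(C\) we therefore get
\[
-K_X\cdot C=2\sum_i D_i\cdot C\geq0,
\]
since the degree of a numerical class on a curve depends only on the class, and \(-K_X\equiv 2D\). So \(-K_X\) is nef. Note that a sum of nef Cartier divisors is nef simply because degrees add.

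Second, bigness. The top self-intersection is
\[
(-K_X)^n=(2D)^n=2^n D^n=2^n\,n!>0.
\]
Here intersection numbers are numerical invariants, so we may compute with \(2D\) in place of \(-K_X\). Since \(-K_X\) is nef with positive top self-intersection, the criterion \eqref{eq:nef-big} (nef and \(D^n>0\) implies nef and big) shows that \(-K_X\) is big.

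There is no real obstacle. The only point needing a word is that nefness and top intersections are invariant under numerical equivalence, so replacing \(-K_X\) by \(2D\) is legitimate. Equivalently, one can apply \eqref{eq:nef-big} to \(D\) itself and use that bigness is preserved under positive multiples and numerical equivalence. This verifies the bigness hypothesis of Theorem~\ref{thm:classification}, with \(M=2D\) and \(\sum b_j=2n\).
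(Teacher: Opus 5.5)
Your proof is correct and follows the paper's own argument: $D$ is nef as a sum of nef divisors, so $-K_X\equiv2D$ is nef, and $(-K_X)^n=2^n\,n!>0$ together with \eqref{eq:nef-big} gives bigness.
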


\begin{proof}
As a sum of nef divisors, \(D\) is nef. Hence \(-K_X\equiv2D\) is nef, with \((-K_X)^n=2^nn!>0\) by Lemma~\ref{lem:intersections}, and \eqref{eq:nef-big} shows that it is big.
\end{proof}

Since the \(D_i\) are nef and form a numerical basis, we can even prove ampleness before using the classification.

\begin{lemma}
\label{lem:fano}
For positive integers \(a_1,\ldots,a_n\), the divisor \(A=\sum_i a_iD_i\) is ample. In particular, \(D\) and \(-K_X\) are ample, and \(X\) is Fano.
\end{lemma}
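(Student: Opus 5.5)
To prove Lemma~\ref{lem:fano}, I would use Kleiman's criterion in its dual form. On a projective variety, a Cartier divisor is ample if and only if its numerical class lies in the interior of the nef cone in \(N^1(X)_{\mathbb R}\). This is \cite[Theorem 1.4.23]{La-Po04}, and the boundary behaviour is described in the corollaries there. So the plan is to show that \(A=\sum_ia_iD_i\) with all \(a_i>0\) is an interior point of \(\operatorname{Nef}(X)\).

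\emph{Step 1.} By Lemma~\ref{lem:intersections}, \(\rho(X)=n\) and the classes \([D_1],\ldots,[D_n]\) form a basis of \(N^1(X)_{\mathbb R}\). Each \(D_i\) is nef, and the nef cone is convex. Hence it contains the closed simplicial cone
\[
\Sigma=\Bigl\{\textstyle\sum_i c_i[D_i]:c_i\ge0\Bigr\}.
\]
\emph{Step 2.} Since the \([D_i]\) are a basis, the open orthant \(\{\sum c_i[D_i]:c_i>0\}\) is an open subset of \(N^1(X)_{\mathbb R}\). It lies inside \(\Sigma\subseteq\operatorname{Nef}(X)\), so it lies in the interior of the nef cone. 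The class of \(A\) belongs to this open orthant, so \(A\) is ample.

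\emph{Step 3.} Taking all \(a_i=1\) shows that \(D\) is ample. By Lemma~\ref{lem:intersections}, \(-K_X\equiv2D\), and ampleness is a numerical property, so \(-K_X\) is ample and \(X\) is Fano.

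I could instead give an argument that avoids the interior formulation, via Seshadri or Nakai--Moishezon. With that approach I would check that \(A^{\dim V}\cdot V>0\) for every subvariety \(V\). Expanding \(A^{\dim V}\cdot V\) gives a sum of terms \(a^{\alpha}D^{\alpha}\cdot V\), each non-negative because the \(D_i\) are nef. Positivity then needs some monomial to be nonzero on \(V\), and that is less immediate. This is why I prefer Kleiman.

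The only subtle point is the precise form of Kleiman's criterion: \(\operatorname{Amp}(X)=\operatorname{int}\operatorname{Nef}(X)\) for a projective variety, with \(N^1\) finite-dimensional. I would cite it directly, and note that openness of the orthant depends on the \([D_i]\) spanning all of \(N^1(X)_{\mathbb R}\). That spanning is exactly the statement \(\rho(X)=n\) from Lemma~\ref{lem:intersections}.
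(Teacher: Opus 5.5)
Your proof is correct and is essentially the paper's argument. The paper unpacks the inclusion \(\operatorname{int}\operatorname{Nef}(X)\subseteq\operatorname{Amp}(X)\) by hand in this case: it writes an ample \(H\equiv\sum_i h_iD_i\), chooses a rational \(\varepsilon>0\) with \(a_i-\varepsilon h_i>0\) so that \(A\equiv\varepsilon H+N\) with \(N\) nef, and then uses that ample plus nef is ample and that ampleness is numerical, resting on the same input from Lemma~\ref{lem:intersections} that the nef classes \([D_i]\) span \(N^1(X)_{\mathbb R}\).
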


\begin{proof}
Choose an ample Cartier divisor \(H\) on \(X\). By Lemma~\ref{lem:intersections} there are integers \(h_i\) with \(H\equiv\sum_ih_iD_i\), and we choose a rational number \(\varepsilon>0\) with \(a_i-\varepsilon h_i>0\) for every \(i\). Then
\[
A\equiv\varepsilon H+N,
\qquad
N=\sum_i(a_i-\varepsilon h_i)D_i,
\]
where \(N\) is nef. An ample class plus a nef class is ample \cite[Corollary 1.4.10]{La-Po04}, and ampleness only depends on the numerical class \cite[Corollary 1.2.24]{La-Po04}, so \(A\) is ample. Taking every \(a_i=1\) shows that \(D\) is ample, and hence so is \(-K_X\equiv2D\).
\end{proof}

We can now apply the classification theorem.

\begin{proposition}
\label{prop:abstract-product}
There exists an algebraic isomorphism \(\psi:X\to(\mathbb P^1)^n\).
\end{proposition}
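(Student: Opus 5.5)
The plan is to apply Theorem~\ref{thm:classification} with \(Y=X\), with the divisors \(D_1,\ldots,D_n\) fixed at the start of this section, and with all coefficients \(b_j=2\) (so \(r=n\)). Once its hypotheses are checked, the conclusion \(X\cong(\mathbb P^1)^n\) is exactly the statement, and any such isomorphism serves as \(\psi\).

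The hypotheses are checked one at a time.
\begin{enumerate}[(a),noitemsep]
\item \(X\) is smooth, connected and projective of dimension \(n\geq1\) by the standing assumptions of Theorem~\ref{thm:main}.
\item Each \(D_i\) is a nef Cartier divisor with numerically nonzero class, by Lemma~\ref{lem:intersections}.
\item The coefficients \(b_i=2\) are positive rationals with \(\sum_i b_i=2n\).
\item Setting \(M=\sum_i 2D_i=2D\), Lemma~\ref{lem:intersections} gives \(-K_X\equiv M\).
\item Bigness of \(M\) follows from Lemma~\ref{lem:big}, since \(M\equiv -K_X\) is nef with \(M^n=2^nn!>0\), hence big by \eqref{eq:nef-big}. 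Alternatively, Lemma~\ref{lem:fano} shows that \(M\) is ample.
\end{enumerate}

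One point needs care: bigness is applied to a divisor only known up to numerical equivalence. Criterion \eqref{eq:nef-big} depends only on the numerical class, so this causes no difficulty.

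There is no genuine obstacle at this stage. The substantive input is Theorem~\ref{thm:classification} itself, whose generalised-klt hypothesis for \((X,0+M)\) was already verified in the smooth case in Section~\ref{sec:numerical}. The only remaining check is that the reduction of Theorem~\ref{thm:nef-complexity} to Theorem~\ref{thm:classification} really forces all \(d_k=1\). This was done there, using that every factor must occur among the \(D_j\) because \(-K_X\) is positive on lines. Identifying \(\psi^*\operatorname{pr}_i^*\mathcal O(1)\) with the \(L_i\), and describing \(\psi\) by sections, is deferred to the subsequent argument.
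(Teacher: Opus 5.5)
Your proposal is correct and matches the paper's proof: you apply Theorem~\ref{thm:classification} with \(r=n\), the divisors \(D_i\), and all \(b_i=2\). You take nefness, numerical nontriviality and \(-K_X\equiv 2D\) from Lemma~\ref{lem:intersections}, and bigness from Lemma~\ref{lem:big}; your remark that bigness depends only on the numerical class, so passing between \(-K_X\) and \(2D\) is harmless, makes explicit a point the paper leaves implicit.
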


\begin{proof}
We apply Theorem~\ref{thm:classification} with \(r=n\), the divisors \(D_i\), and all \(b_i=2\). The \(D_i\) are nef and numerically nonzero by Lemma~\ref{lem:intersections}, the sum \(\sum_i2D_i\equiv-K_X\) is big by Lemma~\ref{lem:big}, and \(\sum_ib_i=2n=2\dim X\).
\end{proof}

\subsection{The factor line bundles}

The isomorphism \(\psi\) need not respect the order of the \(L_i\), and we fix this next. Write \(P=(\mathbb P^1)^n\) and \(H_j=\operatorname{pr}_j^*\mathcal O_{\mathbb P^1}(1)\), with first Chern class \(h_j\). Every line bundle on \(P\) is isomorphic to a unique \(\mathcal O(a_1,\ldots,a_n)\), and \(a_j\) is its degree on a curve along which only the \(j\)-th coordinate varies.

\begin{lemma}
\label{lem:identify-bundles}
After a permutation of the factors of \(P\),
\[
L_i\cong\psi^*H_i\qquad(1\leq i\leq n).
\]
\end{lemma}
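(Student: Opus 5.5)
We transport everything to \(P\) via \(\psi\). Put \(M_i=(\psi^{-1})^*L_i\), a nef line bundle on \(P\) with first Chern class \(m_i=(\psi^{-1})^*x_i\), and write \(M_i\cong\mathcal O(a_{i1},\ldots,a_{in})\) with integers \(a_{ij}\). Nefness of \(M_i\) gives \(a_{ij}\geq0\), by testing on a curve along which only the \(j\)-th coordinate varies. The goal is to show that the integer matrix \(A=(a_{ij})\) is a permutation matrix. After renumbering the factors of \(P\) this gives \(M_i\cong H_i\), hence \(L_i\cong\psi^*H_i\).

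The first input is the relation \(x_i^2=0\). In \(H^*(P;\mathbb Z)\) we have \(h_j^2=0\), so
\[
m_i^2=2\sum_{j<k}a_{ij}a_{ik}h_jh_k ,
\]
and the monomials \(h_jh_k\) with \(j<k\) are independent for \(n\geq2\). Since \(m_i^2=0\), every product \(a_{ij}a_{ik}\) with \(j\neq k\) vanishes. Each row of \(A\) therefore has at most one nonzero entry. It has at least one, because \(M_i\) is numerically nonzero by Lemma~\ref{lem:intersections}. For \(n=1\) this step is vacuous.

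The second input is the normalisation \(\int x_1\cdots x_n=1\). Write \(m_i=a_i h_{\pi(i)}\) with \(a_i>0\). Then
\[
\prod_i m_i=\Bigl(\prod_i a_i\Bigr)\prod_i h_{\pi(i)} ,
\]
and this product is nonzero only if \(\pi\) is injective, hence a permutation. In that case \(\prod_i a_i=1\), so every \(a_i=1\). Alternatively, one can use \(-K_P=\sum_j 2h_j\) together with \(-K_X\equiv 2D\). This gives \(\sum_i m_i=\sum_j h_j\), which with nonnegativity again forces a permutation matrix. Either argument suffices.

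Finally we pass from classes to bundles. Line bundles on \(P\) are determined by their first Chern class, so \(M_i\cong H_{\pi(i)}\). Reordering the factors of \(P\) by \(\pi\), which replaces \(\psi\) by its composite with a coordinate permutation, gives \(L_i\cong\psi^*H_i\). No step is a serious obstacle; the only point needing care is the independence of the \(h_jh_k\) in \(H^4(P)\), which follows from the Künneth formula.
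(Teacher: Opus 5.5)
Your proof is correct and follows the paper's argument: you transport to \(P\), use nefness to get \(a_{ij}\geq0\), and use \(x_i^2=0\) to leave at most one nonzero entry in each row. The only difference is the last step: the paper gets that entry equal to one from primitivity of \(x_i\), and injectivity of the index map from distinctness of the \(x_i\), whereas you obtain both at once from \(\int_X x_1\cdots x_n=1\) (or from the anticanonical relation), which is equally valid.
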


\begin{proof}
Write
\[
B_i=(\psi^{-1})^*L_i
\cong\mathcal O(a_{i1},\ldots,a_{in}),
\qquad a_{ij}\in\mathbb Z.
\]
Since \(B_i\) is nef, \(a_{ij}\geq0\). For \(n\geq2\) the relation \(x_i^2=0\) becomes
\[
0=c_1(B_i)^2
=2\sum_{j<k}a_{ij}a_{ik}h_jh_k,
\]
and as the classes \(h_jh_k\) with \(j<k\) form a basis of \(H^4(P;\mathbb Z)\), at most one coefficient in each row is nonzero. This holds trivially for \(n=1\). The class \(x_i\) is primitive, since it belongs to an integral basis, so the nonzero coefficient equals one. Writing \(\sigma(i)\) for its position, we get \(B_i\cong H_{\sigma(i)}\). The map \(\sigma\) is injective because the \(x_i\) are distinct, hence it is a permutation, and permuting the factors of \(P\) gives the claim.
\end{proof}

\subsection{The section morphism}

The identification of each \(L_i\) with a factor bundle also describes the isomorphism explicitly, through the two-dimensional spaces of sections.

\begin{proposition}
\label{prop:sections}
Every \(L_i\) is globally generated with \(\dim_{\mathbb C}H^0(X,L_i)=2\). Any bases \(s_{i,0},s_{i,1}\in H^0(X,L_i)\) define morphisms
\[
f_i:X\longrightarrow\mathbb P^1,
\qquad f_i(p)=[s_{i,0}(p):s_{i,1}(p)],
\]
whose product \(\Phi=(f_1,\ldots,f_n)\) is an isomorphism and satisfies \(\Phi^*H_i\cong L_i\).
\end{proposition}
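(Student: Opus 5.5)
The plan is to transport everything to \(P=(\mathbb P^1)^n\) through the isomorphism \(\psi\) of Proposition~\ref{prop:abstract-product}, with the factors ordered as in Lemma~\ref{lem:identify-bundles}, so that \(L_i\cong\psi^*H_i\). Pullback along an isomorphism identifies global sections, so \(H^0(X,L_i)\cong H^0(P,H_i)\). By the Künneth formula, or directly because \(H_i=\operatorname{pr}_i^*\mathcal O_{\mathbb P^1}(1)\) and \(\operatorname{pr}_{i*}\mathcal O_P=\mathcal O_{\mathbb P^1}\), we have \(H^0(P,H_i)=\operatorname{pr}_i^*H^0(\mathbb P^1,\mathcal O(1))\). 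This space is two-dimensional, spanned by the pullbacks of the coordinates \(z_{i,0},z_{i,1}\). These have no common zero, so \(H_i\), and hence \(L_i\), is globally generated.

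Next fix arbitrary bases \(s_{i,0},s_{i,1}\) of \(H^0(X,L_i)\). Because \(L_i\) is globally generated, the map \(f_i\) is a well-defined morphism with \(f_i^*\mathcal O_{\mathbb P^1}(1)\cong L_i\). Under the identification above there is \(g_i\in GL_2(\mathbb C)\) with
\[
(s_{i,0},s_{i,1})=\psi^*\bigl((z_{i,0},z_{i,1})\cdot g_i\bigr).
\]
Hence \(f_i=\bar g_i\circ\operatorname{pr}_i\circ\psi\), where \(\bar g_i\in PGL_2(\mathbb C)\) is the induced automorphism of \(\mathbb P^1\). I would check this on points, which is a routine matching of homogeneous coordinates. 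It follows that
\[
\Phi=(\bar g_1\times\cdots\times\bar g_n)\circ\psi,
\]
a composition of isomorphisms, so \(\Phi\) is an isomorphism.

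Finally, \(\Phi^*H_i=f_i^*\mathcal O_{\mathbb P^1}(1)\cong L_i\), since \(\operatorname{pr}_i\circ\Phi=f_i\). Alternatively, \((\bar g_1\times\cdots\times\bar g_n)^*H_i\cong H_i\), because the automorphism respects the \(i\)-th projection and \(PGL_2\) preserves \(\mathcal O(1)\) up to isomorphism.

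The only step needing care is the identification \(f_i=\bar g_i\circ\operatorname{pr}_i\circ\psi\). In particular, one should note that the isomorphism \(L_i\cong\psi^*H_i\) is unique only up to a scalar, and a scalar does not affect the induced map to \(\mathbb P^1\). Everything else follows from the earlier results.
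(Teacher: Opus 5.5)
Your proof is correct and follows the paper's argument essentially step for step. You transport the two-dimensional section space and global generation from $(\mathbb P^1)^n$ via $\psi$ (ordered so that $L_i\cong\psi^*H_i$), then write an arbitrary basis as a $\operatorname{GL}_2$-change of the pulled-back coordinate sections, giving $\Phi=(\bar g_1\times\cdots\times\bar g_n)\circ\psi$. Your direct observation that $\Phi^*H_i=f_i^*\mathcal O_{\mathbb P^1}(1)\cong L_i$ replaces the paper's appeal to $\operatorname{PGL}_2$-invariance of $\mathcal O(1)$, but this difference is only cosmetic.
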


\begin{proof}
Order the factors as in Lemma~\ref{lem:identify-bundles}. Since the other factors have only constant regular functions,
\[
H^0(P,H_i)=H^0(\mathbb P^1,\mathcal O(1))\cong\mathbb C^2,
\]
and \(H_i\) is globally generated. Pulling back by \(\psi\) gives the same properties for \(L_i\). Global generation means that \(s_{i,0}\) and \(s_{i,1}\) have no common zero, so \(f_i\) is a morphism. If the \(s_{i,a}\) are pulled back from the coordinate sections on \(P\), then \(f_i=\operatorname{pr}_i\circ\psi\) and \(\Phi=\psi\). Any other choice of bases differs by matrices \(A_i\in\operatorname{GL}_2(\mathbb C)\), and if \(a_i\) is the induced automorphism of \(\mathbb P^1\), the new map is
\[
(a_1\times\cdots\times a_n)\circ\psi.
\]
This is again an isomorphism, and since projective automorphisms preserve \(\mathcal O(1)\), it satisfies \(\Phi^*H_i\cong L_i\).
\end{proof}

\begin{proof}[Proof of Theorem \ref{thm:main}]
This is the combination of Propositions~\ref{prop:abstract-product} and \ref{prop:sections}.
\end{proof}

Taking positive tensor powers of the \(L_i\) gives the polarised version stated in the introduction.

\begin{corollary}
\label{cor:polarised}
For integers \(a_1,\ldots,a_n>0\), the line bundle \(A=\bigotimes_iL_i^{\otimes a_i}\) is ample, and
\[
(X,A)\cong((\mathbb P^1)^n,\mathcal O(a_1,\ldots,a_n)).
\]
Moreover \(X\) is Fano.
\end{corollary}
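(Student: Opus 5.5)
The corollary follows from results already in place, so the plan is simply to assemble them. First, ampleness of \(A=\bigotimes_iL_i^{\otimes a_i}\) is Lemma~\ref{lem:fano}: the divisor \(\sum_ia_iD_i\) is ample, and \(\mathcal O_X(\sum_ia_iD_i)\cong A\) because \(\mathcal O_X(D_i)\cong L_i\). The same lemma, with every \(a_i=1\), shows that \(-K_X\equiv2D\) is ample, which gives the Fano assertion.

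For the isomorphism of polarised varieties I take the isomorphism \(\Phi:X\to(\mathbb P^1)^n\) of Theorem~\ref{thm:main}, or equivalently of Proposition~\ref{prop:sections}, which satisfies \(\Phi^*H_i\cong L_i\) for each \(i\). Pullback commutes with tensor products, so
\[
\Phi^*\mathcal O(a_1,\ldots,a_n)=\Phi^*\Bigl(\bigotimes_iH_i^{\otimes a_i}\Bigr)\cong\bigotimes_i(\Phi^*H_i)^{\otimes a_i}\cong\bigotimes_iL_i^{\otimes a_i}=A.
\]
Hence \(\Phi\) is an isomorphism of pairs \((X,A)\cong((\mathbb P^1)^n,\mathcal O(a_1,\ldots,a_n))\).

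As a consistency check, ampleness can also be read off from the product side: \(\mathcal O(a_1,\ldots,a_n)\) with all \(a_i>0\) is the pullback of \(\mathcal O(1)\) under the Segre embedding composed with Veronese embeddings, hence very ample. This gives a second proof that \(A\) is ample, independent of Lemma~\ref{lem:fano}.

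There is no real obstacle. The only point needing care is that Lemma~\ref{lem:identify-bundles} permutes the factors of \(P\), so one must use the ordering it fixes. With that ordering \(L_i\) corresponds to the \(i\)-th factor, and \(a_i\) lands on the correct factor.
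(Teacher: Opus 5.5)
Your proof is correct and is essentially the paper's argument. The paper gets ampleness (indeed very ampleness) of \(A\) by pulling back the Segre--Veronese bundle \(\mathcal O(a_1,\ldots,a_n)\) along \(\Phi\), which is your consistency check, and it gets the Fano property from \(-K_P=\mathcal O(2,\ldots,2)\), noting right afterwards that Lemma~\ref{lem:fano} also gives it; you simply make Lemma~\ref{lem:fano} the primary route and write out explicitly the identification \(\Phi^*\mathcal O(a_1,\ldots,a_n)\cong A\) that the paper leaves implicit in ``pulling back by \(\Phi\)''.
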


\begin{proof}
Composing the Veronese embeddings of the factors given by \(\mathcal O_{\mathbb P^1}(a_i)\) with the Segre embedding shows that \(\mathcal O(a_1,\ldots,a_n)\) is very ample, and pulling back by \(\Phi\) proves the first assertion. Since \(K_{\mathbb P^1}=\mathcal O(-2)\), we have \(-K_P=\mathcal O(2,\ldots,2)\), which is ample, so \(X\cong P\) is Fano.
\end{proof}

The Fano property was also obtained in Lemma~\ref{lem:fano}. Both arguments use nefness, and Proposition~\ref{prop:f2} shows that the conclusion fails without it.

%%%%%%%%%%%
%%%%%%%%%%%
\section{The orbit-graph criterion}
\label{sec:graph}

In this section \(X\) satisfies the HTW condition, with the generators, lifts, and labels of Theorem~\ref{thm:htw}, and \(L_i\) is the unique line bundle with \(c_1(L_i)=x_i\). We specialise the description of the nef cone in \cite[Theorem 2.8]{RoWi-Ad22} to these line bundles, and read off their degrees on all curves from the fixed-point weights.

\subsection{Normalised fibre weights}

We now spell all the notation needed for clarity of the result, so that the criterion is purely self-contained. Linearisations are what allow fixed-point weights to compute degrees, so we recall them first.

\begin{definition}
A \(\mathbb C^{\times}\)-\emph{linearisation} of a line bundle \(L\to X\) is an algebraic action on its total space which covers the given action on \(X\), is linear on each fibre, and it is compatible with the group action multiplication. For a fixed point \(p\), the induced representation on the one-dimensional fibre is
\[
t\cdot v=t^{\mu_L(p)}v,\qquad v\in L_p,
\]
for a unique integer \(\mu_L(p)\), called its \emph{fibre weight}. Twisting by the character \(t\mapsto t^k\) means replacing the action on the total space by \(t*v=t^k(t\cdot v)\). This adds \(k\) to every fibre weight.
\end{definition}

Every line bundle has a linearisation, and the only freedom is a simultaneous shift of the fibre weights.

\begin{lemma}
\label{lem:linearisation}
Every line bundle on a normal projective variety with a \(\mathbb C^{\times}\)-action admits a linearisation. If the variety is connected, any two linearisations of the same line bundle differ by a character.
\end{lemma}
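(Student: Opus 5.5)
We plan to prove existence by constructing the group of all lifts of the action to the total space of \(L\) and splitting it, and uniqueness by comparing two linearisations fibrewise. For existence we may also simply quote the general theorem that for a connected linear algebraic group \(G\) acting on a normal variety, some power of every line bundle is linearisable, and that \(L\) itself is linearisable when \(\operatorname{Pic}(G)=0\) (Mumford's GIT, Chapter 1, \S3, or Knop--Kraft--Luna--Vust). Since \(\operatorname{Pic}(\mathbb C^{\times})=0\), this gives the first assertion directly. We nevertheless sketch the argument in our case.

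\emph{Step 1: \(t^*L\cong L\) for all \(t\).} Consider \(t\mapsto[t^*L\otimes L^{-1}]\in\operatorname{Pic}(X)\). Because \(\mathbb C^{\times}\) is connected, this lands in \(\operatorname{Pic}^0(X)\), which for \(X\) normal projective is an abelian variety (up to its reduced structure), and it is a morphism by the seesaw principle applied to \(\sigma^*L\otimes\operatorname{pr}_2^*L^{-1}\) on \(\mathbb C^{\times}\times X\). A morphism from the rational curve \(\mathbb C^{\times}\) to an abelian variety is constant, and its value at \(t=1\) is trivial.

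\emph{Step 2: the extension.} Let \(G_L\) be the group of pairs \((t,\varphi)\) with \(\varphi:t^*L\xrightarrow{\cong}L\). Since \(X\) is projective and connected, \(H^0(X,\mathcal O_X^{\times})=\mathbb C^{\times}\), so each \(\varphi\) is unique up to a scalar. Hence \(G_L\) sits in a central extension
\[
1\to\mathbb C^{\times}\to G_L\to\mathbb C^{\times}\to1 .
\]
Showing that \(G_L\) is an algebraic group acting algebraically on the total space is the main technical obstacle. For this we would follow the standard construction: \(G_L\) is the \(\mathbb G_m\)-torsor over \(\mathbb C^{\times}\) given by the line bundle \(\operatorname{pr}_{1*}\bigl(\sigma^*L\otimes\operatorname{pr}_2^*L^{-1}\bigr)\), which is a line bundle by Step 1 and the seesaw principle. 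This makes \(G_L\) a connected two-dimensional solvable group.

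\emph{Step 3: splitting.} The commutator pairing \(\mathbb C^{\times}\times\mathbb C^{\times}\to\mathbb C^{\times}\) of a central extension is bimultiplicative and alternating. It is therefore \((s,t)\mapsto s^{m}t^{m'}\)-type bilinear on the character lattice \(\mathbb Z\), and an alternating bilinear form on a rank-one lattice vanishes. So \(G_L\) is commutative, and being connected, commutative, and with no unipotent part (an extension of tori), it is a two-dimensional torus. The surjection of tori \(G_L\to\mathbb C^{\times}\) splits at the level of character lattices, because the kernel is a subtorus and the quotient of lattices is free. A splitting homomorphism \(\mathbb C^{\times}\to G_L\) is exactly a linearisation.

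\emph{Uniqueness.} Given two linearisations \(\cdot\) and \(*\), for each \(t\) the composite of one action with the inverse of the other is a fibrewise linear automorphism of \(L\) covering the identity. It is therefore multiplication by some \(\chi(t)\in H^0(X,\mathcal O_X^{\times})=\mathbb C^{\times}\), using connectedness and projectivity. Compatibility of both actions with multiplication shows that \(\chi\) is a homomorphism. Algebraicity of both actions shows that \(\chi\) is algebraic, hence \(\chi(t)=t^k\), and the two linearisations differ by twisting by this character.
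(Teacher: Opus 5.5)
Your proposal is correct, but its two halves relate differently to the paper.

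\textbf{Uniqueness.} This is the paper's argument. The paper writes the difference of the two linearisations as an invertible regular function \(r(t,p)\) on \(\mathbb C^{\times}\times X\), constant in \(p\) and multiplicative in \(t\). You reach the same conclusion pointwise in \(t\), together with your algebraicity remark.

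\textbf{Existence by citation.} The citation you offer (Mumford or Knop--Kraft--Luna--Vust, together with \(\operatorname{Pic}(\mathbb C^{\times})=0\)) is essentially the paper's route. The paper quotes Brion's Theorem 2.14 and notes that the relevant exponent for \(\mathbb C^{\times}\) is one, because \(\operatorname{Pic}(\mathbb C^{\times}_K)=0\) for every extension field \(K\).

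\textbf{Existence through the group \(G_L\) of lifts.} This sketch is a genuinely different and more hands-on route. It shows exactly where normality is used: properness of \(\operatorname{Pic}^0\) forces \(t^*L\cong L\). On a nodal rational curve with its \(\mathbb C^{\times}\)-action this fails for a bundle of degree one, and that bundle is indeed not linearisable. The sketch then reduces existence to splitting an extension of tori. The price is that the algebraicity of \(G_L\), and of its action on the total space, is precisely the substantive content of the theorems you cite. You defer this step to the standard construction, so as a stand-alone proof the sketch is only as complete as that step.

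\textbf{A small correction in Step 3.} No alternating-form argument is needed. Your phrase about an \((s,t)\mapsto s^{m}t^{m'}\)-type form on the character lattice conflates two different things. A bimultiplicative morphism \(\mathbb C^{\times}\times\mathbb C^{\times}\to\mathbb C^{\times}\) is a unit \(\lambda s^{a}t^{b}\), and bimultiplicativity alone forces \(\lambda=1\) and \(a=b=0\). Alternatively, a connected algebraic group that is an extension of a torus by a torus has trivial unipotent radical, so it is already a torus and commutativity comes for free.
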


\begin{proof}
By Brion's linearisation theorem \cite[Theorem 2.14]{Br-Li15}, on a normal variety acted on by a connected linear algebraic group, a fixed tensor power of every line bundle is linearisable, the exponent being the stable exponent of the Picard group of the group. For \(\mathbb C^{\times}\) this exponent is one, since \(K[t,t^{-1}]\) is a principal ideal domain and so \(\operatorname{Pic}(\mathbb C^{\times}_K)=0\) for every extension field \(K\) of \(\mathbb C\). Thus the line bundle itself is linearisable.

Two linearisations differ by an invertible regular function \(r(t,p)\) on \(\mathbb C^{\times}\times X\). It is constant in \(p\), because a connected projective variety has only constant regular functions, and the group law gives \(r(st)=r(s)r(t)\) and \(r(1)=1\). Hence \(r(t)=t^k\) for some \(k\in\mathbb Z\).
\end{proof}

With this freedom, the fibre weights can be matched with the equivariant lifts of Theorem~\ref{thm:htw}. Write \(\mathbf 1_{\{i\in I\}}\) for the indicator, equal to one if \(i\in I\) and zero otherwise.

\begin{lemma}
\label{lem:normalise-weights}
Each \(L_i\) has a linearisation whose fibre weights are
\refstepcounter{equation}
\[
\tag{\theequation}
\label{eq:normalised-weights}
\mu_{L_i}(p_I)=-\mathbf 1_{\{i\in I\}}.
\]

\end{lemma}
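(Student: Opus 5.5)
By Lemma~\ref{lem:linearisation}, \(L_i\) admits some linearisation. The plan is to compare its fibre weights with the equivariant lift \(a_i\) of Theorem~\ref{thm:htw}(v), and then to twist by a character to match \eqref{eq:normalised-weights}.

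A linearisation makes \(L_i\) an equivariant line bundle, so it has an equivariant first Chern class \(c_1^{\mathbb C^\times}(L_i)\in H^2_{\mathbb C^\times}(X;\mathbb Z)\). This class lifts \(x_i\). With the sign convention for \(u\), its restriction to a fixed point \(p\) is the equivariant Chern class of the one-dimensional representation \(L_{i,p}\), namely \(\mu_{L_i}(p)\,u\).

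Next we compare this with \(a_i\). Both are lifts of \(x_i\), so their difference lies in the kernel of the forgetful map \(H^2_{\mathbb C^\times}(X;\mathbb Z)\to H^2(X;\mathbb Z)\). By equivariant formality, \(H^*_{\mathbb C^\times}(X;\mathbb Z)\) is a free \(\mathbb Z[u]\)-module. This holds because \(H^*(X;\mathbb Z)\) is free and concentrated in even degrees, by Theorem~\ref{thm:htw}(iii) or by the Białynicki-Birula cells. The forgetful map is then the reduction modulo \(u\). In degree two the kernel is therefore \(\mathbb Z\cdot u\), the image of \(H^2(B\mathbb C^\times)\), since \(H^0(X)=\mathbb Z\) by connectedness. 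Hence
\[
c_1^{\mathbb C^\times}(L_i)=a_i+ku
\qquad\text{for some }k\in\mathbb Z.
\]
Restricting to \(p_I\) and using Theorem~\ref{thm:htw}(v) gives
\[
\mu_{L_i}(p_I)=-\mathbf 1_{\{i\in I\}}+k.
\]

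Finally, twisting the linearisation by the character \(t\mapsto t^{-k}\) subtracts \(k\) from every fibre weight, which yields \eqref{eq:normalised-weights}.

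The main obstacle is identifying the kernel of the forgetful map in degree two integrally. I would first argue this via the Leray–Serre spectral sequence of \(X\to X_{\mathbb C^\times}\to B\mathbb C^\times\). It degenerates because all cohomology is in even degrees, so the kernel in degree two is exactly \(\mathbb Z u\).

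An alternative avoids topology. Consider the \(\mathbb C^\times\)-invariant curve \(C\) through \(p_\varnothing\) and \(p_I\). Its degree with respect to \(L_i\) is the weight difference divided by the tangent weight. This pins down the differences \(\mu(p_I)-\mu(p_\varnothing)\), but only along edges of the orbit graph, so the cohomological route is cleaner.
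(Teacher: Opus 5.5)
Your proposal is correct and follows the paper's proof: take any linearisation, note that its equivariant first Chern class and \(a_i\) both lift \(x_i\), so they differ by \(ku\) because the degree-two kernel of the forgetful map is \(\mathbb Z u\) by the Leray--Serre spectral sequence, read off the weights by restricting to \(p_I\), and twist by \(t\mapsto t^{-k}\). The only difference is cosmetic. You get that kernel from full degeneration, using that the cohomology is free and concentrated in even degrees, whereas the paper needs only \(H^1(X;\mathbb Z)=0\) and the vanishing of \(H^{\mathrm{odd}}(B\mathbb C^{\times};\mathbb Z)\).
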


\begin{proof}
Choose a linearisation by Lemma~\ref{lem:linearisation}, and let \(\widetilde x_i\) be the equivariant first Chern class of the associated line bundle on the Borel construction, an equivariant lift of \(x_i\). Since \(H^1(X;\mathbb Z)=0\) and \(H^*(B\mathbb C^{\times};\mathbb Z)\) vanishes in odd degrees, the Leray-Serre spectral sequence of the Borel fibration has no nonzero differentials into or out of total degree two, and it gives an exact sequence
\[
0\longrightarrow\mathbb Z u
\longrightarrow H^2_{\mathbb C^{\times}}(X;\mathbb Z)
\longrightarrow H^2(X;\mathbb Z)\longrightarrow0.
\]
As \(a_i\) and \(\widetilde x_i\) both lift \(x_i\), we have \(\widetilde x_i-a_i=k_iu\) for an integer \(k_i\). Twisting the linearisation by \(t\mapsto t^{-k_i}\) turns its equivariant first Chern class into \(a_i\). At a fixed point \(p_I\) this class restricts to \(\mu_{L_i}(p_I)u\), so the restriction formula in Theorem~\ref{thm:htw} gives \eqref{eq:normalised-weights}.
\end{proof}

\subsection{Degrees from fibre weights}

Let \(x\in X\) be a point which is not fixed, and let \(C\) be the closure of its orbit, with its reduced structure. By Lemma~\ref{lem:semifree} the orbit map \(t\mapsto t\cdot x\) identifies \(\mathbb C^{\times}\) with a dense open subset of \(C\), so the normalisation of \(C\) is \(\mathbb P^1\) and the orbit map extends to it,
\[
\nu:\mathbb P^1\longrightarrow C\subset X,
\qquad \nu([t:1])=t\cdot x\quad(t\ne0).
\]
The map \(\nu\) is equivariant for the action \(s[t:1]=[st:1]\), as one sees on the dense orbit, and the points
\[
p=\nu([0:1]),\qquad q=\nu([1:0])
\]
are fixed. The following is the weight-one case of \cite[Lemma 2.2]{RoWi-Ad22}, and for equalised actions it is exactly \cite[Corollary 2.3(c)]{RoWi-Ad22}, where it is also shown that orbit closures are smooth. Since this is how we compute degrees throughout, we include the short proof, if only to fix the sign.

\begin{lemma}
\label{lem:degree-weight}
For a linearised line bundle \(L\) on \(X\),
\refstepcounter{equation}
\[
\tag{\theequation}
\label{eq:degree-weight}
L\cdot C=\mu_L(p)-\mu_L(q).
\]
In particular, \(p\ne q\).
\end{lemma}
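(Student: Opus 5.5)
The plan is to pull \(L\) back along \(\nu\) and compute the degree of \(\nu^*L\) on \(\mathbb P^1\) equivariantly. Since \(\nu\) is \(\mathbb C^{\times}\)-equivariant for \(s[t:1]=[st:1]\), the pullback \(\nu^*L\) inherits a linearisation. Its fibre weights at the fixed points \([0:1]\) and \([1:0]\) are \(\mu_L(p)\) and \(\mu_L(q)\), because the fibres of \(\nu^*L\) there are \(L_p\) and \(L_q\) with the same action. By definition \(L\cdot C=\deg\nu^*L\). The statement therefore reduces to a computation on \(\mathbb P^1\) with the standard action.

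On \(\mathbb P^1\), write \(\nu^*L\cong\mathcal O(d)\). By Lemma~\ref{lem:linearisation}, any linearisation of \(\mathcal O(d)\) differs from a chosen one by a character. That shift changes both fibre weights by the same amount and leaves their difference unchanged. So it suffices to check the formula for one linearisation of each \(\mathcal O(d)\). I will take the \(d\)-th tensor power of the natural linearisation of \(\mathcal O(1)\) recalled before Theorem~\ref{thm:htw}, which has fibre weight \(0\) at \([0:1]\) and \(-1\) at \([1:0]\). Its \(d\)-th power has weights \(0\) and \(-d\), with difference \(d\).

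For negative \(d\) I use the dual linearisation, since weights are additive under tensor products and negate under duals. This gives \(\mu(0)-\mu(\infty)=d=\deg\). Hence \(L\cdot C=\mu_L(p)-\mu_L(q)\). As a sanity check, a direct sign verification is to look at the section \(z_1\) of \(\mathcal O(1)\): it is nonvanishing at \([0:1]\), and the sign convention there must match the one the paper fixed.

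For the final claim \(p\ne q\), I would argue by contradiction. Suppose \(p=q\). Then the right-hand side of the formula vanishes for every linearised \(L\), so every line bundle has degree zero on \(C\). Applying this to an ample bundle, which is linearisable by Lemma~\ref{lem:linearisation}, gives positive degree on the curve \(C\), a contradiction.

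The main obstacle is purely bookkeeping. First, I must confirm that pullback along the non-injective normalisation \(\nu\) respects the linearisation and the fibre weights. This holds because \(\nu\) is an equivariant morphism, so \(\nu^*L\) is \(\mathbb C^{\times}\times_{X}\)-compatible. Second, I must keep the sign consistent with the convention fixed for \(\mathcal O(1)\).
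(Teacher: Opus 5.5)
Your proposal is correct and follows essentially the same route as the paper. You pull the linearisation back along the equivariant normalisation \(\nu\), and compare it on \(\mathcal O(d)\) with the \(d\)-th power of the natural linearisation of \(\mathcal O(1)\), which has weights \(0\) and \(-d\), using that two linearisations differ only by a character; you then get \(p\neq q\) from the positive degree on \(C\) of a linearised ample bundle.
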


\begin{proof}
Write \(\nu^*L\cong\mathcal O_{\mathbb P^1}(d)\), so that \(L\cdot C=d\). The natural linearisation of \(\mathcal O(1)\) on \(\mathbb P^1\) has weights \(0\) and \(-1\) at \([0:1]\) and \([1:0]\), so \(\mathcal O(d)\) has a linearisation with weights \(0\) and \(-d\). By Lemma~\ref{lem:linearisation}, any other linearisation, in particular the pullback of that of \(L\), has weights \(k\) and \(k-d\) for some \(k\in\mathbb Z\). This proves \eqref{eq:degree-weight}. For an ample line bundle \(A\) with a linearisation it gives \(\mu_A(p)-\mu_A(q)=A\cdot C>0\), so \(p\ne q\).
\end{proof}

For the Hattori bundles the formula becomes very simple. By \eqref{eq:normalised-weights}, an orbit closure directed from \(p_I\) to \(p_J\) satisfies
\refstepcounter{equation}
\[
\tag{\theequation}
\label{eq:hattori-degree}
L_i\cdot C
=\mathbf 1_{\{i\in J\}}-\mathbf 1_{\{i\in I\}}.
\]

\subsection{Degeneration to invariant curves}

To test nefness it is enough to know the degrees on orbit closures, because every curve degenerates to a sum of them. The following proposition produces an effective representative using the action, and again this seems to be completely standard and well known in the field, we include it for completeness. This is adapted from  \cite{RoWi-Ad22}. The third author learnt this idea of degeneration using the Hilbert scheme in a conversation with Daniel Halpern-Leistner. In fact this whole project was started because of this relation of positivity with degeneration.

\begin{proposition}
\label{prop:invariant-cycles}
Let \(X\) be a smooth projective variety with a \(\mathbb C^{\times}\)-action. Every curve \(C\subset X\) is numerically equivalent to an effective invariant one-cycle. If the fixed locus is finite, each component of this cycle is the closure of a one-dimensional orbit.
\end{proposition}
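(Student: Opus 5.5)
We prove the statement by the standard limit argument in the Hilbert scheme (equivalently, the Chow variety) of \(X\). Let \(C\subset X\) be a curve and consider the morphism \(\mathbb C^{\times}\to\operatorname{Hilb}(X)\), \(t\mapsto[t\cdot C]\). All translates \(t\cdot C\) have the same Hilbert polynomial with respect to a fixed ample line bundle, so this map lands in a single component, which is projective. By the valuative criterion of properness, the map extends over \(0\in\mathbb A^1\). Concretely, the closure of \(\{(t,y):y\in t\cdot C\}\) in \(\mathbb A^1\times X\) is a flat family over \(\mathbb A^1\), since it is the closure of a family over a smooth curve. Let \(C_0\) be its special fibre. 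We then take the associated one-cycle \([C_0]\), with multiplicities given by the lengths of \(C_0\) at the generic points of its one-dimensional components. This is an effective one-cycle, and it is nonzero.

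The first point is numerical equivalence. For any line bundle \(L\) on \(X\), the degree \(L\cdot[C_t]\) is constant in a flat family of one-dimensional subschemes. Indeed, it equals the leading coefficient of \(\chi(C_t,L^{m}|_{C_t})\) in \(m\), and this Euler characteristic is locally constant by flatness. For \(t\ne0\), the translate \(t\cdot C\) is the image of \(C\) under an automorphism that lies in the connected group \(\mathbb C^{\times}\). Its degree against \(L\) is therefore the same as that of \(C\): the pullback \(t^*L\) varies in a connected family and has constant first Chern class, or one can simply use constancy in the family again. Hence \(L\cdot C=L\cdot[C_0]\) for every \(L\), so \(C\equiv[C_0]\). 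The second point is invariance. The family over \(\mathbb A^1\) is \(\mathbb C^{\times}\)-equivariant for the action \(s\cdot(t,y)=(st,s\cdot y)\), and this action fixes the fibre over \(0\). Hence \(C_0\) is a \(\mathbb C^{\times}\)-stable subscheme, and its one-dimensional components are invariant irreducible curves.

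It remains to identify the components when \(X^{\mathbb C^{\times}}\) is finite. Let \(Z\) be an invariant irreducible curve. Since the fixed locus is finite, \(Z\) contains a non-fixed point \(x\). The orbit \(\mathbb C^{\times}\cdot x\) is one-dimensional, because its stabiliser is a proper closed subgroup and hence finite, and it is contained in \(Z\). The closure of this orbit is therefore an irreducible closed curve inside the irreducible curve \(Z\), so the two are equal. Thus every component of \([C_0]\) is the closure of a one-dimensional orbit.

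The main obstacle is the first step: justifying the extension of the family over \(0\) and the invariance of degrees under flat specialisation. Here I would either quote properness of \(\operatorname{Hilb}(X)\) together with constancy of Hilbert polynomials, or use the closure construction over the DVR at \(0\) directly. If \(C\) is itself invariant, the family is constant and there is nothing to prove.
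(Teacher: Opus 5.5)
Your proposal is correct and follows the paper's argument: you take the flat limit of the translates \(t\cdot C\) as \(t\to0\), and constancy of Euler characteristics in the flat family gives the numerical equivalence. The differences are minor: you build the limit as the closure in \(\mathbb A^1\times X\) rather than via the universal family over the Hilbert scheme, you read off invariance from the equivariance of the total space rather than from separatedness of \(\operatorname{Hilb}\), and you leave implicit why each component of \(C_0\) is invariant, which holds because the connected group \(\mathbb C^{\times}\) cannot permute finitely many components nontrivially.
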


\begin{proof}
Choose a very ample line bundle \(A\) with a linearisation (Lemma~\ref{lem:linearisation}), and let \(P_C\) be the Hilbert polynomial of \(C\) with respect to \(A\). The linearisation identifies \(t^*A\) with \(A\), so every translate \(t\cdot C\) has Hilbert polynomial \(P_C\), and the translated family over \(\mathbb C^{\times}\), being isomorphic to \(\mathbb C^{\times}\times C\), is flat. This gives a morphism
\[
\beta:\mathbb C^{\times}\longrightarrow
\operatorname{Hilb}_{P_C}(X),
\qquad t\longmapsto[t\cdot C].
\]
Since the Hilbert scheme is proper, \(\beta\) extends to \(\overline\beta:\mathbb A^1\to\operatorname{Hilb}_{P_C}(X)\), and pulling back the universal family gives a flat projective family \(\pi:\mathcal Z\to\mathbb A^1\) whose fibre \(Z_t\) is \(t\cdot C\) for \(t\ne0\). For \(s\in\mathbb C^{\times}\) the maps \(t\mapsto s\cdot\overline\beta(t)\) and \(t\mapsto\overline\beta(st)\) agree on \(\mathbb C^{\times}\), hence everywhere because the Hilbert scheme is separated, and evaluating at zero shows that \(Z_0\) is invariant.

Let \(L\) be any line bundle on \(X\). Its pullback to \(\mathcal Z\) is flat over \(\mathbb A^1\), so both \(\chi(Z_t,L|_{Z_t})\) and \(\chi(Z_t,\mathcal O_{Z_t})\) are independent of \(t\) \cite[Chapter III, Theorem 9.9]{Hr-Al77}. Let \([Z_0]_1=\sum_jm_jC_j\) be the fundamental one-cycle of the special fibre, with \(m_j\) the length of its structure sheaf at the generic point of \(C_j\). Riemann-Roch on the normalisations of the \(C_j\), together with additivity of the Euler characteristic, gives
\refstepcounter{equation}
\[
\tag{\theequation}
\label{eq:euler-degree}
\chi(Z_0,L|_{Z_0})-\chi(Z_0,\mathcal O_{Z_0})
=\sum_jm_j(L\cdot C_j),
\]
because sheaves supported at finitely many points, such as embedded points and the corrections coming from normalisation, contribute equally to both terms. The same identity at \(t=1\) gives \(L\cdot C=\chi(C,L|_C)-\chi(C,\mathcal O_C)\), and by the constancy just proved
\[
L\cdot C=\sum_jm_j(L\cdot C_j).
\]
As \(L\) was arbitrary, \(C\) is numerically equivalent to \([Z_0]_1\).

Finally, the connected group \(\mathbb C^{\times}\) cannot permute the finitely many components \(C_j\) nontrivially, so each of them is invariant. A component on which the action is trivial lies in the fixed locus, which is finite. Every other component contains a one-dimensional orbit, which is dense, so it is an orbit closure.
\end{proof}

It follows that nefness can be tested on orbit closures alone.

\begin{corollary}
\label{cor:nef-orbits}
If the fixed locus is finite, a line bundle \(L\) on \(X\) is nef if and only if its degree is non-negative on every one-dimensional orbit closure.
\end{corollary}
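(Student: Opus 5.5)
The corollary is a direct consequence of Proposition~\ref{prop:invariant-cycles}. I would prove the two implications separately.

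The forward implication is immediate from the definition of nefness. A one-dimensional orbit closure is an integral curve in \(X\). So if \(L\) is nef, its degree on every such curve is non-negative.

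For the converse, suppose \(L\cdot C'\geq0\) for every one-dimensional orbit closure \(C'\). Let \(C\subset X\) be an arbitrary curve. By Proposition~\ref{prop:invariant-cycles}, \(C\) is numerically equivalent to an effective invariant one-cycle \(\sum_jm_jC_j\) with \(m_j>0\). Because the fixed locus is finite, each component \(C_j\) is the closure of a one-dimensional orbit. Numerical equivalence is tested against every line bundle, so in particular against \(L\), and this gives
\[
L\cdot C=\sum_jm_j\,(L\cdot C_j).
\]
Each term on the right is non-negative, so \(L\cdot C\geq0\). Since \(C\) was arbitrary, \(L\) is nef.

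The only point that needs checking is that the degree \(L\cdot C_j\) of Proposition~\ref{prop:invariant-cycles}, defined through the normalisation, agrees with the degree used in the hypothesis. Both are \(\deg\nu^*L\) on the normalisation, so they coincide. The cycle is also nonempty, with at least one \(m_j>0\), but nonemptiness is not needed for the inequality; only non-negativity of the coefficients matters. I do not expect any step to be a genuine obstacle: all the substance lies in the degeneration argument already carried out in Proposition~\ref{prop:invariant-cycles}.
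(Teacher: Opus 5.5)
Your proposal is correct and follows essentially the same route as the paper. The forward direction is immediate. For the converse, you apply Proposition~\ref{prop:invariant-cycles} to write an arbitrary curve as numerically equivalent to an effective combination of orbit closures, then sum the non-negative degrees.
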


\begin{proof}
One implication is clear. Conversely, given a curve \(C\), write \(C\equiv\sum_jm_jC_j\) as in Proposition~\ref{prop:invariant-cycles}. Then \(L\cdot C=\sum_jm_j(L\cdot C_j)\geq0\), since each \(C_j\) is an orbit closure.
\end{proof}

\subsection{Minimal edges}

Following \cite[Definition 2.6 and the ensuing definition of minimal edges]{RoWi-Ad22}, we record the endpoints of orbits in a finite directed graph. Only its minimal edges will matter.

\begin{definition}
The \emph{directed orbit graph} \(\Gamma(X)\) has vertex set \(X^{\mathbb C^{\times}}\). It has an edge \(p\to q\) when there is a non-fixed point \(x\in X\) with
\[
\lim_{t\to0}t\cdot x=p,
\qquad \lim_{t\to\infty}t\cdot x=q.
\]
A directed path of length \(r\) is a sequence of vertices \(p_0,\ldots,p_r\) with an edge \(p_{k-1}\to p_k\) for \(1\leq k\leq r\). An edge \(p\to q\) is \emph{minimal} if no path of length at least two has the same endpoints.
\end{definition}

If \(A\) is an ample line bundle with a linearisation, Lemma~\ref{lem:degree-weight} gives
\[
p\to q\quad\Longrightarrow\quad\mu_A(p)>\mu_A(q),
\]
so \(\Gamma(X)\) is a finite graph without directed cycles. Directed paths therefore define a partial order on the fixed points, and this is what lets us replace edges by chains of minimal ones.

\begin{lemma}
\label{lem:paths}
Every directed edge can be replaced by a path consisting of minimal edges. Therefore, if every minimal edge has the form \(p_I\to p_{I\cup\{j\}}\), every edge \(p_I\to p_J\) satisfies \(I\subsetneq J\).
\end{lemma}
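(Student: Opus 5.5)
The plan is to exploit the fact, recorded just before the statement, that \(\Gamma(X)\) has no directed cycles. I would fix an ample line bundle \(A\) with a linearisation (Lemma~\ref{lem:linearisation}). By Lemma~\ref{lem:degree-weight}, every edge \(p\to q\) satisfies \(\mu_A(p)-\mu_A(q)=A\cdot C\geq1\), where \(C\) is any orbit closure realising the edge. Hence the length of any directed path from \(p\) to \(q\) is at most \(\mu_A(p)-\mu_A(q)\). Alternatively, one can bound the length by the number of vertices, since there are no cycles and only \(2^n\) fixed points. Either way, path lengths between two given vertices are bounded.

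For the first assertion I would argue by induction on \(\mu_A(p)-\mu_A(q)\), or more simply by choosing a longest directed path from \(p\) to \(q\). Such a path exists because lengths are bounded and the edge itself is a path. I then claim that every edge \(p_{k-1}\to p_k\) in a longest path is minimal. Suppose some edge is not minimal. Then there is a path of length at least two from \(p_{k-1}\) to \(p_k\), and splicing it in place of that edge produces a strictly longer path from \(p\) to \(q\). This contradicts maximality. So the longest path consists of minimal edges, which proves the first sentence.

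For the second assertion, take an edge \(p_I\to p_J\) and replace it by a path \(p_I=p_{I_0}\to p_{I_1}\to\cdots\to p_{I_r}=p_J\) of minimal edges, with \(r\geq1\). By hypothesis each step has the form \(I_k=I_{k-1}\cup\{j_k\}\) with \(j_k\notin I_{k-1}\), so \(I_{k-1}\subsetneq I_k\). Chaining these strict inclusions gives \(I\subsetneq J\).

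There is no serious obstacle here. The only points needing care are that a longest path exists, which follows from the boundedness given by acyclicity and finiteness, and that the spliced sequence really is a directed path. The second point is immediate from the definition of a path as a sequence of consecutive edges.
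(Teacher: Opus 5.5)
Your proof is correct and is essentially the paper's argument. The paper orders the fixed points by directed paths and takes a maximal-length chain from \(p\) to \(q\), whose consecutive elements are cover relations and hence joined by minimal edges; your longest path with the splicing step is the same idea phrased directly in terms of paths, and your bound \(\mu_A(p)-\mu_A(q)\) is simply an alternative to the paper's finiteness argument for the existence of such a path.
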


\begin{proof}
Write \(p\leq q\) if \(p=q\) or there is a directed path from \(p\) to \(q\). Concatenation gives transitivity and the absence of cycles gives antisymmetry, so this is a partial order. If \(q\) covers \(p\), that is, \(p<q\) with no vertex strictly in between, then a path from \(p\) to \(q\) has no intermediate vertex, so it is a single edge, and that edge is minimal. Conversely, the endpoints of a minimal edge form a cover relation, since an intermediate vertex would give a longer path. Given \(p<q\), a chain between them of maximal length exists because the graph is finite, and its consecutive elements are cover relations, hence joined by minimal edges. Under the stated condition each of these edges adds one element to the label, so the initial label is strictly contained in the final one.
\end{proof}

In other words, the hypothesis of Lemma~\ref{lem:paths} says that the minimal orbit graph, with its HTW labels, is a subgraph of the Hasse diagram of the Boolean lattice of subsets of \(\{1,\ldots,n\}\).

With the normalised fibre weights, the degree formula turns nefness of the Hattori bundles into a containment condition on labels.

\begin{proposition}
\label{prop:exact-criterion}
All the Hattori line bundles are nef if and only if every edge \(p_I\to p_J\) of \(\Gamma(X)\) satisfies \(I\subsetneq J\).
\end{proposition}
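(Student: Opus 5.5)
The plan is to combine Corollary~\ref{cor:nef-orbits} with the degree formula \eqref{eq:hattori-degree}. By Corollary~\ref{cor:nef-orbits}, since the fixed locus of an HTW action is finite, each \(L_i\) is nef if and only if \(L_i\cdot C\geq0\) for every one-dimensional orbit closure \(C\). Each such \(C\) is the closure of the orbit of a non-fixed point \(x\), and its limit points \(p=\lim_{t\to0}t\cdot x\) and \(q=\lim_{t\to\infty}t\cdot x\) are fixed points \(p_I\ne p_J\) by Lemma~\ref{lem:degree-weight}. Hence \(C\) gives an edge \(p_I\to p_J\) of \(\Gamma(X)\). Conversely, every edge arises from at least one orbit closure. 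By \eqref{eq:hattori-degree},
\[
L_i\cdot C=\mathbf 1_{\{i\in J\}}-\mathbf 1_{\{i\in I\}},
\]
and this depends only on the edge, not on the particular orbit realising it.

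For the direction ``edges increasing \(\Rightarrow\) nef'', suppose every edge satisfies \(I\subsetneq J\). Then for each \(i\), whenever \(i\in I\) we also have \(i\in J\), so the indicator difference is never \(-1\). Thus \(L_i\cdot C\in\{0,1\}\) on every orbit closure, and Corollary~\ref{cor:nef-orbits} gives nefness of every \(L_i\).

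For the converse, suppose all \(L_i\) are nef and let \(p_I\to p_J\) be an edge, realised by an orbit closure \(C\). Nefness gives \(\mathbf 1_{\{i\in J\}}\geq\mathbf 1_{\{i\in I\}}\) for all \(i\), that is, \(I\subseteq J\). Strictness follows from \(p_I\ne p_J\) (Lemma~\ref{lem:degree-weight}), together with the fact that the labelling \(I\mapsto p_I\) of Theorem~\ref{thm:htw} is a bijection, so distinct points carry distinct labels.

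The only point needing care is the bookkeeping in the first step. One must check that the direction convention of edges (source at \(t\to0\), target at \(t\to\infty\)) matches the sign convention in Lemma~\ref{lem:degree-weight}, namely \(L\cdot C=\mu_L(p)-\mu_L(q)\) with \(p\) the \(t\to0\) limit. One must also check that the normalised weights \(\mu_{L_i}(p_I)=-\mathbf 1_{\{i\in I\}}\) of Lemma~\ref{lem:normalise-weights} give exactly \eqref{eq:hattori-degree} with the stated sign. Both are already recorded in the paper, so the argument is a direct assembly of them.
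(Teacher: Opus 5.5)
Your proposal is correct and matches the paper's proof. Both directions come from Corollary~\ref{cor:nef-orbits} combined with the degree formula \eqref{eq:hattori-degree}, and strictness of \(I\subsetneq J\) comes from the distinctness of the endpoints given by Lemma~\ref{lem:degree-weight}.
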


\begin{proof}
Suppose first that the line bundles are nef, and let \(C\) be an orbit closure for an edge \(p_I\to p_J\). If some \(i\) belonged to \(I\setminus J\), formula \eqref{eq:hattori-degree} would give \(L_i\cdot C=-1\). Hence \(I\subseteq J\), and \(I\ne J\) because the endpoints are distinct. Conversely, if every edge has nested labels, then by \eqref{eq:hattori-degree} each \(L_i\) has degree zero or one on every orbit closure, and Corollary~\ref{cor:nef-orbits} shows that it is nef.
\end{proof}

\subsection{Equivariance}

\begin{proof}[Proof of Theorem \ref{thm:graph}]
By Lemma~\ref{lem:paths}, every edge \(p_I\to p_J\) has \(I\subsetneq J\), so the \(L_i\) are nef by Proposition~\ref{prop:exact-criterion}. Together with Theorem~\ref{thm:htw}, this gives all the hypotheses of Theorem~\ref{thm:main}, and hence the product isomorphism of Proposition~\ref{prop:sections}.

To make it equivariant, give each \(L_i\) the linearisation of Lemma~\ref{lem:normalise-weights} and let \(\mathbb C^{\times}\) act on \(V_i=H^0(X,L_i)\) by
\[
(t\cdot s)(p)=t\cdot\bigl(s(t^{-1}\cdot p)\bigr),
\qquad t\in\mathbb C^{\times},\ s\in V_i,\ p\in X.
\]
At a fixed point, evaluation \(V_i\to(L_i)_p\) is equivariant, and it is surjective because \(L_i\) is globally generated. The fibre weights are zero at \(p_{\varnothing}\) and \(-1\) at \(p_{\{1,\ldots,n\}}\), so both weights occur in \(V_i\), and as \(\dim_{\mathbb C}V_i=2\) each occurs exactly once. Let \(s_{i,0}\) and \(s_{i,1}\) span the weight \(-1\) and weight zero subspaces, respectively. A section \(s\) of weight \(m\) satisfies
\[
s(t\cdot p)=t^{-m}\bigl(t\cdot s(p)\bigr),
\]
and in a local frame the action of \(t\) on the fibres multiplies both sections by the same factor, which cancels in homogeneous coordinates. Therefore
\[
[s_{i,0}(t\cdot p):s_{i,1}(t\cdot p)]
=[t s_{i,0}(p):s_{i,1}(p)],
\]
which is the required equivariance of the map in Proposition~\ref{prop:sections}.
\end{proof}

The equivariance argument only uses nefness of the Hattori line bundles, however that is established. It also fixes the labels. At \(p_I\) the section of weight \(-1\) vanishes when \(i\notin I\) and the section of weight zero vanishes when \(i\in I\), so the isomorphism sends \(p_I\) to the point whose \(i\)-th coordinate is \([1:0]\) for \(i\in I\) and \([0:1]\) otherwise.

\subsection{The two model graphs}

On \((\mathbb P^1)^n\) with the diagonal weight-one action, every strict inclusion \(I\subsetneq J\) gives an edge from \(p_I\) to \(p_J\). Indeed, the orbit of the point with coordinate \([1:0]\) for \(i\in I\), \([1:1]\) for \(i\in J\setminus I\), and \([0:1]\) otherwise has exactly these limits. Conversely, along any orbit each coordinate either stays at a fixed point or moves from \([0:1]\) to \([1:0]\), so all edges come from strict inclusions, and the minimal ones add a single element. For example, the curve
\[
\mathbb P^1\longrightarrow\mathbb P^1\times\mathbb P^1,
\qquad [z_0:z_1]\longmapsto([z_0:z_1],[z_0:z_1])
\]
is an orbit closure giving an edge from \(p_{\varnothing}\) directly to \(p_{\{1,2\}}\). This edge is not minimal, since a path through either singleton vertex has the same endpoints, and it is the reason the condition in Theorem~\ref{thm:graph} is imposed on minimal edges only.

On \(\mathbb F_2\), the negative section of Proposition~\ref{prop:f2} is itself an orbit closure. Its endpoints have tangent weights \((1,-1)\) and \((-1,1)\), so they are labelled by two distinct singletons in \(\{1,2\}\), which are incomparable, and this edge violates the condition of Proposition~\ref{prop:exact-criterion}. By Lemma~\ref{lem:paths} the minimal-edge criterion fails as well. If the Hattori bundles are ordered so that \(c_1(L_2)=S+F\), the failure is precisely
\[
L_2\cdot S=(S+F)\cdot S=-1.
\]

\end{document}